\numberwithin{theorem}{section}
\newcommand{\TheTitle}{{\scriptsize Splitting schemes \& segregation in reaction-(cross-)diffusion systems}} 
\newcommand{\TheAuthors}{{\scriptsize J. A. Carrillo, S. Fagioli, F.  Santambrogio, and M. Schmidtchen}}
\headers{\TheTitle}{\TheAuthors}
\title{{\TheTitle}\thanks{JAC was supported by the Engineering and Physical Sciences Research Council (EPSRC) under grant no. EP/P031587/1. Most of the work was performed during a visit of FS to Imperial College thanks to a joint Imperial-CNRS Fellowship, the hospitality and support of both Imperial College and CNRS are warmly acknowledged. SF was partially supported by the local fund of the University
of L’Aquila "DP-LAND" (Deterministic Particles for Local And Nonlocal Dynamics), and by the
Erasmus Mundus programme "MathMods", www.mathmods.eu. SF thanks the Department of Mathematics at Imperial College in London for hosting him for a visiting period during which this work was carried out.}}
\author{
  J. A. Carrillo\thanks{Department of Mathematics, Imperial College London, SW7 2AZ London, UK.
    \email{carrillo@imperial.ac.uk}.}
  \and
  S. Fagioli\thanks{DISIM - Department of Information Engineering, Computer Science and Mathematics, University of L'Aquila, Via Vetoio 1 (Coppito) 67100 L'Aquila (AQ) - Italy
    \email{simone.fagioli@univaq.it}.}
  \and
  F.  Santambrogio\thanks{Laboratoire de Math\'ematiques d'Orsay, Univ. Paris-Sud, CNRS, Universit\'e Paris-Saclay, 91405 Orsay Cedex, FRANCE \email{filippo.santambrogio@math.u-psud.fr}.
  }
  \and M. Schmidtchen\thanks{Department of Mathematics, Imperial College London, SW7 2AZ London, UK. \email{m.schmidtchen15@imperial.ac.uk}.}
}
\newcommand{\eE}{\mathcal{E}}
\newcommand{\R}{\mathbb{R}}
\DeclareMathOperator*{\argmin}{argmin}
\DeclareMathOperator*{\argmax}{argmax}
\newcommand{\supp}{\mathrm{supp}}
\newcommand{\sigmahalf}{\sigma^{n+1/2}}
\newcommand{\rhalf}{r^{n+1/2}}
\newcommand{\rhohalf}{\rho^{n+1/2}}
\newcommand{\etahalf}{\eta^{n+1/2}}
\newcommand{\Uhalf}{U^{n+1/2}}
\newcommand{\id}{\mathrm{id}}
\newcommand{\sigmanp}{\sigma^{n+1}}
\newcommand{\rnp}{r^{n+1}}
\newcommand{\rhonp}{\rho^{n+1}}
\newcommand{\etanp}{\eta^{n+1}}
\newcommand{\Unp}{U^{n+1}}
\newcommand{\sigman}{\sigma^{n}}
\newcommand{\rn}{r^{n}}
\newcommand{\rhon}{\rho^{n}}
\newcommand{\Un}{U^{n}}
\newcommand{\BV}[1]{\|#1\|_{BV}}
\newcommand{\Linfty}[1]{\|#1\|_{L^\infty}}
\newcommand{\N}{\mathbb{N}}
\newcommand{\ddt}{\frac{\mathrm{d}}{\mathrm{d} t}}
\renewcommand{\d}{\,\mathrm{d}}
\newcommand{\chii}{\chi'}
\newcommand{\Wt}{d_2^2}
\newcommand{\Wo}{d_1}
\newcommand{\Wp}{d_p^p}
\newcommand{\Bl}{d_{\mathrm{BL}}}
\newcommand{\curlyO}{\mathcal{O}}
\newcommand{\curlyK}{\mathcal{K}}
\newcommand{\curlyM}{\mathcal{M}}
\newcommand{\curlyP}{\mathcal{P}}
\newcommand{\curlyT}{\mathcal{T}}
\newtheorem{remark}[theorem]{Remark}
\begin{document}

\maketitle

\begin{abstract}
One of the most fascinating phenomena observed in reaction-diffusion systems is the emergence of segregated solutions, \emph{i.e.} population densities with disjoint supports. We analyse such a reaction cross-diffusion system. In order to prove existence of weak solutions for a wide class of initial data without restriction about their supports or their positivity, we propose a variational splitting scheme combining ODEs with methods from optimal transport. In addition, this approach allows us to prove conservation of segregation for initially segregated data even in the presence of vacuum.
\end{abstract}

\begin{keywords}
	cross-diffusion, reaction-diffusion, splitting schemes, variational schemes, segregation, pattern formation
\end{keywords}

\begin{AMS}
35K57;  
35A15;  	
47N60  
\end{AMS}

\section{Introduction}
In this work we consider the following reaction-diffusion system for the evolution on an interval $x\in\Omega$ of two species with population densities $\rho,\eta\geq0$ :
\begin{equation}\label{eq:main}
\begin{cases}
\partial_t \rho = \partial_x\left(\rho\partial_x \chii(\rho+\eta)\right)+\rho F_1(\rho,\eta)+\eta G_1(\rho,\eta), \\
\partial_t \eta = \partial_x\left(\eta \partial_x\chii(\rho+\eta)\right)+\eta F_2(\rho,\eta)+\rho G_2(\rho,\eta),
  \end{cases}
\end{equation}
where $\chi : \left[0,\infty\right)\to \left[0,\infty\right)$ is a $C^1$ super-linear function modelling nonlinear diffusion and $F_i$ and $G_i$, $i=1,2$ model the reaction phenomena. Systems of this type appear naturally in mathematical biology. A fundamental biological phenomenon in interactions among different biological species is the \emph{inhibition} or \emph{activation} of growth whenever two populations occupy the same habitat. One species may promote or suppress the proliferation of the other species. In models involving cells or bacteria, the limited growth of different cell types can be attributed to volume or size constraints of the individual cells forming the different populations. The diffusive part in \eqref{eq:main} was originally introduced in the seminal papers \cite{GP84} and \cite{BGHP85,BGH87} and exhibits an intriguing phenomenon: segregated densities remain separated at all times. 

In fact, nonlinear diffusions are natural ways to include volume filling effects into mathematical biology models, see \cite{HPVolume,CCVolume} in the case of the classical Keller-Segel system. They help to avoid blow-up in these aggregation models in a biologically meaningful way and lead generically to asymptotic stabilisation. 
In the absence of reactions, the system leads to the nonlinear diffusion equation
\begin{equation}
	\label{eq:diff_sigma}
	\partial_t \sigma = \partial_x\left(\sigma\partial_x\chii(\sigma)\right) = \partial_x^2 \beta(\sigma).
\end{equation}
with $\sigma=\rho+\eta$, in which $\chii(\sigma)$ models the resistance to compression of the whole group of individuals $\sigma$. The natural assumption on \eqref{eq:diff_sigma} in order to be a diffusion equation is $\beta'(s)>0$ for $s>0$, possibly degenerating at $s=0$, or equivalently $\chi''(s)>0$ for $s>0$. The particular relevant case of $\chi(s)=s^2/2$ can be understood as the mean-field limit of interacting particles with very localised repulsion, see \cite{Ol,BCM,BV,Klar}. 

Related reaction-diffusion models to \eqref{eq:main} appear in tissue growth models where cell adhesion and volume effects are important factors determining cell sorting in heterogeneous cell populations, see \cite{MurTog, CHS17, BDFS, CCS17}, and zebrafish lateral line patterning \cite{VS}. They are also basic building bricks for a variety of cancer invasion models in the literature \cite{Chaplain.2005,  Preziosi.2003, Stinner.2015, Johnston.2010,BerDaPMim,ArmPaiShe1,ArmPaiShe2,ArmPaiShe3,GerCha,DomTruGerCha,SfaKolHelLuk} in which the coupling with other biologically meaningful modelling factors such as extracellular matrix, enzymatic activators and other substances are taken into account. These works usually involve drift terms due to long range attraction and/or repulsion between individuals leading to related mathematical difficulties with respect to \eqref{eq:main}, see for instance \cite{KM17}.  

The nonlinear diffusion equation \eqref{eq:diff_sigma} is well-studied, see \cite{Vaz}, and it can be understood as a gradient flow, see \cite{Otto,AGS,Vil_1,Snt_B}, of the energy functional
\begin{equation*}
 \eE(\sigma)=\int_\Omega \chi(\sigma) \d x,
\end{equation*}
in the metric space of probability measures endowed with a suitable topology induced by the euclidean transport distance denoted by $d_2$. The wellposedness of solutions to the nonlinear diffusion equation \eqref{eq:diff_sigma} was obtained in \cite{Otto} by means of the so called \emph{JKO-scheme}, \emph{cf.} \cite{JKO98}, which is a particular case of the minimising movement scheme by De Giorgi, see \cite{Amin} and the references therein. The idea of such a scheme is to recursively construct a sequence by solving a minimisation problem in a certain metric space $(X,d)$. Given some initial condition $\bar\sigma$ for Eq. \eqref{eq:diff_sigma} and a fixed time step $0< \tau < 1$ we set $\sigma_\tau^0=\bar{\sigma}$, and then recursively define
\begin{equation}\label{eq:JKO_intro}
  \sigma_\tau^{n+1}\in \argmin\limits_{\sigma\in X} \left\{\frac{1}{2\tau}d^2(\sigma,\sigma_\tau^{n})+\eE(\sigma)\right\},
\end{equation}
for $n\in\N$. 
The seminal work of R.~J.~McCann \cite{McC97} shows that $\eE(\sigma)$ is displacement convex or geodesically convex on the metric space of probability measures on the line endowed with $d_2$ as soon as $\beta$ is nondecreasing on $(0, + \infty)$, called the McCann's condition in short. We also refer to \cite[Chap. 9]{AGS}, \cite[p. 26]{CJMTU}, or \cite[Chap. 17]{Vil_2} for this classical notion, and \cite{BC} for related issues. 
Upon choosing a proper time interpolation $\sigma_\tau$, it can be proven that the sequence $\left\{\sigma_\tau\right\}_\tau$ converges to a weak solution of Eq. \eqref{eq:diff_sigma}, see \cite{Vil_1,AGS,Vil_2,Snt_B} and the references therein. 

In this work, we propose a variational splitting scheme in order to construct weak solutions to the reaction-diffusion system \eqref{eq:main}. More precisely, we solve in an inner time stepping the diffusive part of the system by the JKO scheme related to the nonlinear diffusion equation \eqref{eq:diff_sigma}, and then we transport both densities $\rho,\eta$ through the flow generated by the equation for the total population $\sigma$. 
Note that in this step the total and individual masses of the populations are unchanged in time. In the second inner time stepping of the splitting scheme, we solve the system of ODEs parameterised by the spatial variable $x\in\Omega$ leading to the final approximation of our new population densities after a time step. This variational splitting scheme will be written in details in Section 2. The splitting between reaction and diffusion steps is natural from the numerical analysis view point as it has already been used for variations of Keller-Segel models where the diffusion step is solved by the JKO scheme \cite{Carrillo.2008,CKL17} in the case of a single population density coupled with a system of reaction-diffusion equations. 

Our main result shows the convergence of the splitting variational scheme towards weak solutions of the system \eqref{eq:main}. The main mathematical difficulty here arises from the cross-diffusion term allowing for segregation fronts to form in the solutions. This phenomenon was proven in \cite{BGHP85} in the case of initial data with separated supports for the populations. More precisely, while \cite{GP84} constructs a source solution of the system without reactions similar to the well-known Barenblatt-Pattle profiles \cite{Vaz} for nonlinear diffusions, \cite{BGHP85} constructs a solution to the system without reactions by formulating it as a free boundary problem for a single effective equation, and by characterising the segregation front through this free boundary. This approach can only work in case the support of both populations are at a positive distance to each other initially.
Later \cite{BerDaPMim, BHIM12} combined both the nonlinear diffusion and the reaction to obtain a system similar to \eqref{eq:main} showing similar segregation phenomena by regularisation techniques. However, their approach heavily relies on the absence of vacuum as they assume that $\sigma_0$ is bounded below by a positive constant.

These remarkable results have severe consequences --  initially smooth solutions lose their regularity when both densities meet each other. In fact, they become discontinuous at the contact interface immediately. This phenomenon legitimises our functional space choice as bounded functions of bounded variation, see the precise notion of weak solution and assumptions on the initial data in the next section. 

In contrast to \cite{BGHP85,BerDaPMim, BHIM12}, we show the convergence of our variational splitting scheme for general initial data even in the presence of vacuum and for general nonlinearities.
Moreover, we recover their result in \cite{BGHP85,BerDaPMim} about segregation fronts by showing that initial data which are initially segregated remain segregated for all times. In fact, we are even able to drop their restrictive assumption of strict boundedness away from zero, \emph{i.e.} absence of vacuum. An important technical point in our proof relies on displacement convexity of an auxiliary functional that allows to obtain further regularity on the approximate solutions in order to pass to the limit in the nonlinear diffusion terms. This auxiliary functional imposes a slightly more restricted set of nonlinear diffusions satisfying some integrability condition at the origin, see the precise conditions in the next section.

The rest of the paper is organised as follows: In Section \ref{sec:main_result} we introduce the variational splitting scheme, present the main result, and explain the strategy of the proof. Section \ref{sec:proof_main_result} is dedicated to deriving all estimates necessary for proving the existence theorem as well as the segregation theorem, and finally, in Section \ref{sec:numerics} we conclude by illustrating the result with some numerical examples.


\section{\label{sec:main_result}Preliminaries and main result}
As already mentioned, our main aim is to study the existence of weak solutions for the following  one-dimensional two species cross-diffusion and reaction system:
\begin{equation}\label{eq:complete}
\begin{cases}
\partial_t \rho = \partial_x\left(\rho\partial_x \chii(\rho+\eta)\right)+\rho F_1(\rho,\eta)+\eta G_1(\rho,\eta) ,&\quad \mbox{in } \left[0,T\right]\times\Omega, \\
  \partial_t \eta = \partial_x\left(\eta \partial_x\chii(\rho+\eta)\right)+\eta F_2(\rho,\eta)+\rho G_2(\rho,\eta), &\quad \mbox{in } \left[0,T\right]\times\Omega,\\
  \partial_x\chi'\left(\rho+\eta\right)(x,t)=0, & \quad \mbox{on } \left[0,T\right]\times\partial\Omega,\\
  \rho(\cdot,0)=\rho_0,\quad \eta(\cdot,0)=\eta_0, & \quad \mbox{in } \Omega,
\end{cases}
\end{equation}
where $\Omega\subset \R$ is an open bounded interval, $T>0$.  Moreover,  $\chi$ denotes an internal energy density and $F_i$ and $G_i$, $i=1,2$ model the reaction phenomena. As mentioned before the space of bounded functions with bounded variation is a natural functional setting. 

\begin{definition}[\label{def:BV_space}Space of functions of bounded variations]
	Let $f:\bar \Omega \rightarrow \R$. We define its variation with respect to a partition $P:=\{x_1 < x_2 < \cdots < x_{|P|}\}\subset \Omega$ by
	\begin{align*}
		V_P(f):=\sum\limits_{i=1}^{|P|-1}|f(x_{i+1})-f(x_i)|.
	\end{align*}
	We call $f$ a function of bounded variation if its total variation $\sup_P V_P(f)<\infty$ is finite. Here the supremum is taken over all partitions of $\Omega$.
	We denote by $BV(\Omega)$ the set of functions whose variation is bounded. Equipped with the norm
	\begin{align*}
		\BV{f}:= \sup\limits_{P} V_P(f),
	\end{align*}
	the set $BV(\Omega)$ is a vector space.
\end{definition}

We shall see in the remainder of this section that the vector space of bounded function with bounded variation is a good choice to construct solutions. In our analysis we will exploit the following property.
\begin{lemma}[\label{lem:Ralgebra}$BV(\Omega)\cap L^\infty(\Omega)$ is an $\R$-algebra.]
	The vector space $BV(\Omega)\cap L^\infty(\Omega)$ equipped with the pointwise multiplication
	\begin{align*}
		\big(BV(\Omega)\cap L^\infty(\Omega)\big)^2\ni(f,g)\mapsto f g\in BV(\Omega)\cap L^\infty(\Omega),
	\end{align*}
	is a real algebra.
\end{lemma}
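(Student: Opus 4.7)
The plan is to reduce everything to the single nontrivial point, namely that $BV(\Omega)\cap L^\infty(\Omega)$ is closed under pointwise multiplication. The vector space structure, the bilinearity and the associativity of pointwise multiplication, as well as the existence of the unit (the constant function $1$, which has total variation zero and is bounded), are all inherited from the ambient $\mathbb{R}$-algebra of real-valued functions on $\bar\Omega$ and require no verification.

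So I would only need to check that if $f,g\in BV(\Omega)\cap L^\infty(\Omega)$, then $fg$ again belongs to this space. The $L^\infty$ part is immediate from the trivial bound $\Linfty{fg}\le \Linfty{f}\Linfty{g}$. For the $BV$ part I would fix an arbitrary partition $P=\{x_1<\dots<x_{|P|}\}$ of $\Omega$ and use the telescopic identity
\begin{equation*}
f(x_{i+1})g(x_{i+1}) - f(x_i)g(x_i) = f(x_{i+1})\bigl(g(x_{i+1})-g(x_i)\bigr) + g(x_i)\bigl(f(x_{i+1})-f(x_i)\bigr),
\end{equation*}
so that the triangle inequality yields the pointwise bound
\begin{equation*}
\bigl|f(x_{i+1})g(x_{i+1}) - f(x_i)g(x_i)\bigr| \le \Linfty{f}\,|g(x_{i+1})-g(x_i)| + \Linfty{g}\,|f(x_{i+1})-f(x_i)|.
\end{equation*}

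Summing over $i=1,\dots,|P|-1$ gives $V_P(fg)\le \Linfty{f}\,V_P(g)+\Linfty{g}\,V_P(f)$, and taking the supremum over all partitions $P$ produces the product estimate
\begin{equation*}
\BV{fg}\le \Linfty{f}\BV{g}+\Linfty{g}\BV{f},
\end{equation*}
which shows in particular that $fg\in BV(\Omega)\cap L^\infty(\Omega)$ and closes the argument. I do not anticipate any real obstacle here; the only subtlety worth pointing out is that this argument uses pointwise values (the partition $P$ picks out finitely many points), which is why it is important that we work in the space of functions of bounded variation in the sense of Definition~\ref{def:BV_space}, rather than with equivalence classes of $L^1$ representatives; for bounded BV functions on an interval this distinction is harmless up to choosing a good representative.
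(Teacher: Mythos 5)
Your proof is correct: the paper itself omits the argument, stating only that it is standard, and your telescoping decomposition yielding $\BV{fg}\le \Linfty{f}\BV{g}+\Linfty{g}\BV{f}$ is precisely that standard argument. Your closing remark about pointwise values versus $L^1$-representatives is well taken and is consistent with Definition~\ref{def:BV_space}, which works with genuine functions on $\bar\Omega$.
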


The proof of the previous result is standard. Notice that in one dimension, $BV$-regularity implies boundedness. We prefer to write $BV(\Omega)\cap L^\infty(\Omega)$ for the sake of clarity and possible future generalisations.

\subsection{\label{sec:metric_structure}Metric structure}
Consider $\Omega\subseteq \R$ an open bounded interval, and denote by $\curlyM_+(\Omega)$ the set of positive and finite measures. Throughout this paper we will make use of the following notation
\begin{align*}
	\curlyP^m(\Omega):=\left\{\mu \in \curlyM_+(\Omega)\, \big|\, \mu(\Omega)= m\right\},
\end{align*}
that is, the set of positive measures with mass $m>0$.  Consider a measure $\mu\in\curlyP^m(\Omega)$ and a Borel map $\curlyT:\R\rightarrow\R$. We denote by  $\nu = \curlyT_{\#}\mu\in\curlyP^m(\Omega)$ the push-forward measure of $\mu$ through $\curlyT$, defined by
\begin{equation*}
    \int_{\Omega}f(y)\d \curlyT_{\#}\mu(y)=\int_{\Omega}f(\curlyT(x))\d\mu(x),
\end{equation*}
for all Borel functions $f$ on $\Omega$. We call $\curlyT$ a \emph{transport map} pushing $\mu$ to $\nu$. We endow the space $\curlyP^m(\Omega)$ with the $p-$Wasserstein distance, $p\geq 1$, 
\begin{equation*}
     \Wp(\mu_1,\mu_2)=\inf_{\gamma\in \Pi^m(\mu_1,\mu_2)}\left\{ \int_{\Omega\times\Omega}|x-y|^{p}\d \gamma(x,y)\right\}.
\end{equation*}
Here, $\Pi^m(\mu_1,\mu_2)$ is the set of all transport plans between $\mu_1$ and $\mu_2$, that is the set of positive measures of fixed mass, $\gamma \in\curlyP^m(\Omega\times\Omega)$, defined on the product space such that $\pi^{i}_{\#}\gamma=\mu_{i}$, for $i=1,2$,  where $\pi^{i}$ denotes the projection operator on the $i$-th component of the product space. If $\mu_1$ is absolutely continuous with respect to the Lebesgue measure the optimal transport map, $\gamma$, is unique and can be written as $\gamma = (\mathrm{id}, \curlyT)_\#\mu$. In addition there exists a \emph{Kantorovich potential}, $\varphi$, that is linked to the transport map, $\curlyT$, in the following way
\begin{align}
	\label{eq:rel_T_KantorovichPotential}
	\curlyT(x) = (\id - \partial_x \varphi)(x).
\end{align}
We refer to \cite{Vil_1,AGS,Vil_2,Snt_B} and the references therein for a good account of the properties of transport distances and the state of the art in gradient flows/steepest descents of functionals in metric spaces of probability measures. While transport distances are an incredibly powerful tool for dealing with transport PDEs exhibiting a gradient flow structure, it is not applicable in the presence of source terms. This is owing to the fact that it is only defined  for two measures of the same mass. 

To resolve this shortcoming we will make use of the Bounded-Lipschitz distance $\Bl$, classically used for the derivation of the Vlasov equation, see \cite{Neun,Spo,CCR,CCC,GM} and the references therein. The Bounded-Lipschitz distance $\Bl$, also frequently called \emph{flat metric}, is defined as follows
\begin{align*}
\Bl(\mu,\nu):=\sup \left\{\int_\Omega f \d\left(\mu-\nu\right)|\, \,\|f\|_{L^{\infty}(\Omega)},\|f'\|_{L^{\infty}(\Omega)}\leq 1\right\} = \|\mu-\nu\|_{({W^{1,\infty}(\Omega)})^\ast}.
\end{align*}
Since our problem is posed on the product space we extend the metric setting to the space $\mathcal{M}_+\times\mathcal{M}_+$, the product space of nonnegative measures in the canonical way. For $d\in\{\Bl, d_p\}$, $1\leq p<\infty$, we define the product metric (still denoted $d$) as
\begin{align}
	\label{eq:BL_dist}
	d(U,\tilde{U}):=d(\rho,\tilde{\rho})+d(\eta,\tilde{\eta})\,,
\end{align}
where $U=(\rho,\eta),\tilde{U}=(\tilde{\rho},\tilde{\eta})\in\curlyM_+\times\curlyM_+$.

\begin{proposition}[Properties of $\Bl$]
Let $\mu, \nu \in L_+^1(\Omega)$ be two densities. Then the following properties hold true:
\begin{enumerate}[(i.)]
\item $\Bl(\mu,\nu)\leq \|\mu-\nu\|_{L^1(\Omega)},$
\item $\Bl(\mu,\nu) \leq \Wo(\mu,\nu)$, whenever $\mu(\Omega) = \nu(\Omega)$.
\end{enumerate}
\end{proposition}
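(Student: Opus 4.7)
The plan is to prove each property directly from the definition of the bounded-Lipschitz distance, with $(i)$ following by a simple test-function estimate and $(ii)$ relying on Kantorovich--Rubinstein duality for $\Wo$.

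For $(i)$, I would fix any test function $f\in W^{1,\infty}(\Omega)$ with $\|f\|_{L^\infty}\leq 1$ and $\|f'\|_{L^\infty}\leq 1$. Writing the absolutely continuous densities as $\mu, \nu\in L^1_+(\Omega)$, we simply estimate
\begin{equation*}
\int_\Omega f\,\mathrm{d}(\mu-\nu) = \int_\Omega f(x)\bigl(\mu(x)-\nu(x)\bigr)\,\mathrm{d} x \leq \|f\|_{L^\infty(\Omega)}\|\mu-\nu\|_{L^1(\Omega)}\leq \|\mu-\nu\|_{L^1(\Omega)}.
\end{equation*}
Taking the supremum over all admissible $f$ then gives $\Bl(\mu,\nu)\leq \|\mu-\nu\|_{L^1(\Omega)}$.

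For $(ii)$, I would invoke the Kantorovich--Rubinstein duality formula for the 1-Wasserstein distance between measures of the same total mass, namely
\begin{equation*}
\Wo(\mu,\nu)=\sup\left\{\int_\Omega f\,\mathrm{d}(\mu-\nu)\,\bigg|\, f\colon\Omega\to\R \text{ with } \|f'\|_{L^\infty(\Omega)}\leq 1\right\},
\end{equation*}
which is valid precisely because $\mu(\Omega)=\nu(\Omega)$ makes the integral insensitive to additive constants in $f$. Since every competitor in the definition of $\Bl$ also satisfies the Lipschitz bound $\|f'\|_{L^\infty(\Omega)}\leq 1$, the supremum defining $\Bl(\mu,\nu)$ is taken over a \emph{smaller} class than the one defining $\Wo(\mu,\nu)$, and the desired inequality $\Bl(\mu,\nu)\leq \Wo(\mu,\nu)$ follows immediately.

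Neither step is a serious obstacle: property $(i)$ is immediate from Hölder's inequality applied to the dual characterisation, and property $(ii)$ is a direct comparison of admissible test classes once the Kantorovich--Rubinstein formula is at hand. The only subtle point worth emphasising is the necessity of the equal-mass assumption in $(ii)$: without it, constants in $f$ contribute to $\int f\,\mathrm{d}(\mu-\nu)$, so the larger class of 1-Lipschitz functions would produce an infinite supremum and the comparison with $\Wo$ would fail.
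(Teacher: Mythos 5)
Your proposal is correct and follows essentially the same route as the paper: part (i) by testing against admissible $f$ and using $\|f\|_{L^\infty}\leq 1$, and part (ii) by comparing the $\Bl$ test class with the larger class of $1$-Lipschitz functions in the Kantorovich--Rubinstein dual formulation of $\Wo$. Your closing remark on why the equal-mass hypothesis is indispensable is a correct and welcome addition, though not needed for the proof itself.
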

\begin{proof}
Given $\mu,\nu\in L_+^1(\Omega)$ be arbitrary and $f \in W^{1,\infty}(\Omega)$ with $\|f\|_{W^{1,\infty}(\Omega)} \leq 1$. Hence we may write
\begin{align*}
	\left|\int_\Omega f \d (\mu-\nu)\right| \leq \int_\Omega |f|\d |\mu-\nu| =\|\mu-\nu\|_{L^1(\Omega)}.
\end{align*}
Taking the supremum over all such functions $f$ we get the first statement by using definition \eqref{eq:BL_dist}. For the second statement, we additionally assume that $\mu(\Omega) = \nu(\Omega)$. We recall the dual definition of $\Wo$,
\[
\Wo(\mu,\nu)=\sup\left\{\int_\Omega f \d (\mu-\nu)\,|\,f\in \mathrm{Lip}(\Omega)\mbox{ s.t. } \|f\|_{\mathrm{Lip}(\Omega)} \leq  1\right\},
\]
where $\mathrm{Lip}(\Omega)$ is the set of Lipschitz-continuous functions, see \cite{Vil_1,Snt_B}. Then property (ii.) is a consequence of this formulation, and this concludes the proof.
\end{proof}
\begin{corollary}\label{cor:triangular:_inequality}
	Let $U_i=(\mu_i,\nu_i)\in \mathcal{M}_+(\Omega)^2$ for $i=1,2,3$. Furthermore assume $\mu_i,\nu_i\in L^1(\Omega)$ for $i=1,2$ as well as $\mu_2(\Omega)=\mu_3(\Omega)$ and $\nu_2(\Omega)=\nu_3(\Omega)$.
	Then
	\begin{align*}
		\Bl\big(U_1, U_3\big) \leq \|U_1 -U_2\|_{L^1(\Omega)} + \Wo\big(U_2,U_3\big).
	\end{align*}
\end{corollary}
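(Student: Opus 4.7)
The plan is simply to apply the triangle inequality for the flat metric $\Bl$ at the intermediate point $U_2$, and then invoke the two previously proved estimates, one for each summand. Before doing so, I want to note that the product metric defined in \eqref{eq:BL_dist} is itself a metric on $\curlyM_+(\Omega)^2$: indeed, since $\Bl$ is a norm-induced distance on each component (coming from the $(W^{1,\infty})^\ast$-norm), the sum of the two component-wise distances inherits nonnegativity, symmetry and the triangle inequality.

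Now I would proceed as follows. First, write
\begin{equation*}
\Bl(U_1,U_3) \;\leq\; \Bl(U_1,U_2) \;+\; \Bl(U_2,U_3)
\end{equation*}
using this triangle inequality. For the first term on the right, the hypothesis that $\mu_i,\nu_i \in L^1(\Omega)$ for $i=1,2$ lets me apply property (i) of the preceding proposition to each component, giving
\begin{equation*}
\Bl(U_1,U_2) \;=\; \Bl(\mu_1,\mu_2) + \Bl(\nu_1,\nu_2) \;\leq\; \|\mu_1-\mu_2\|_{L^1(\Omega)} + \|\nu_1-\nu_2\|_{L^1(\Omega)} \;=\; \|U_1-U_2\|_{L^1(\Omega)}.
\end{equation*}
For the second term, the equal-mass assumptions $\mu_2(\Omega)=\mu_3(\Omega)$ and $\nu_2(\Omega)=\nu_3(\Omega)$ allow me to apply property (ii) component-wise, producing
\begin{equation*}
\Bl(U_2,U_3) \;=\; \Bl(\mu_2,\mu_3) + \Bl(\nu_2,\nu_3) \;\leq\; \Wo(\mu_2,\mu_3) + \Wo(\nu_2,\nu_3) \;=\; \Wo(U_2,U_3).
\end{equation*}
Summing the two bounds yields the claim.

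There is essentially no obstacle here: the only point worth double-checking is that the product extension of $\Bl$ indeed obeys the triangle inequality (so that the insertion of the intermediate $U_2$ is legitimate), and that the hypotheses neatly split across the two summands, with the $L^1$-regularity used on the first pair and the equal-mass condition on the second pair.
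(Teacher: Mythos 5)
Your argument is correct and is precisely the intended one: the paper states this as an immediate corollary of the preceding proposition, to be obtained by inserting $U_2$ via the triangle inequality for $\Bl$ and then applying property (i.) to the pair $(U_1,U_2)$ and property (ii.) to the equal-mass pair $(U_2,U_3)$, componentwise. Your additional check that the product metric in \eqref{eq:BL_dist} satisfies the triangle inequality is a sensible (if routine) point to make explicit.
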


Throughout two quantities are crucial for our analysis -- the sum of the two densities, $\sigma$, and the ratio, $r$, between one density, say $\rho$, and the sum 
\begin{equation*}
 \sigma = \rho + \eta, \qquad \text{and} \qquad r = \frac{\rho}{\sigma},
\end{equation*}
where we assume no vacuum, \textit{i.e.} $\sigma>0$. A straightforward computation shows these functions satisfy the following system of PDEs
\begin{align}\label{eq:transf_sys}
\left\{
\begin{array}{l}
	\partial_t \sigma = \partial_x\left(\sigma\partial_x\chii(\sigma)\right)+\sigma \left(r(\tilde F_1 +\tilde G_2) +(1-r)\left(\tilde G_1 + \tilde F_2\right)\right), \\[0.7em]
	\partial_t r =\partial_x r\partial_x \chii(\sigma) + r(1-r)\left(\tilde{F}_1-\tilde{F}_2\right)+(1-r)^2\tilde{G}_1-r^2\tilde{G}_2,
\end{array}
\right.
\end{align}
where we used $\tilde{F}_i(\sigma,r)=F_i(r\sigma,(1-r)\sigma)$ and $\tilde{G}_i(\sigma,r)=G_i(r\sigma,(1-r)\sigma)$, for $ i=1,2$, to denote the reaction terms in the transformed variables. In order to simplify the analysis in Section \ref{sec:proof_main_result}, let us introduce the more concise notation
\begin{align*}
	A_1(r,\sigma): = \tilde F_1 + \tilde G_2,\quad A_2(r,\sigma) := \tilde G_1 + \tilde F_2, \quad \mathrm{and} \quad  A_3(r,\sigma):= \tilde F_1 - \tilde F_2.
\end{align*}
Note that these functions are Lipschitz and bounded as they are linear combinations of $BV\cap L^\infty$ functions. Thus the transformed system \eqref{eq:transf_sys} can be rewritten in the more compact form
\begin{align}\label{eq:transf_sys1}
\left\{
\begin{array}{l}
	\partial_t \sigma = \partial_x\left(\sigma\partial_x\chii(\sigma)\right)+\sigma \big(r A_1 +(1-r)A_2\big),\\[0.5em]
	\partial_t r =\partial_x r\partial_x\chii(\sigma) + r(1-r) A_3 +(1-r)^2\tilde{G}_1-r^2\tilde{G}_2.
\end{array}
\right.
\end{align}
\noindent
Let us note here, that taking $F_i=G_i=0$ in \eqref{eq:main}, yields the following nonlinear cross-diffusion system
\begin{align}\label{eq:cross_diff}
\left\{
\begin{array}{l}
	\partial_t \rho = \partial_x\left(\rho\partial_x\chii(\sigma)\right),\\[0.5em]
	\partial_t \eta = \partial_x\left(\eta\partial_x\chii(\sigma)\right),
\end{array}
\right.
\end{align}
where the sum $\sigma$ satisfies the nonlinear diffusion equation \eqref{eq:diff_sigma}.

In order to be useful for our purposes, we need some properties on the internal energy density. The role of these properties will be clearer after the statement of our main result.
\begin{definition}[Internal energy density]
	\label{def:feasible_NLD}
	A function $\chi:[0,\infty] \rightarrow \R$ is an internal energy density if 
	\begin{enumerate}[({NL}-i)]
		\item $\chi \in C^0([0,\infty],\R) \cap C^2((0,\infty),\R)$ with $\chi''>0$,
		\item $\lim_{h\downarrow 0} \chi'(h) = 0$.
		\item the integrals $	\kappa(x) := \int_1^x \frac{\chi''(s)}{s}\d s$, and $K(\sigma):= \int_0^\sigma \kappa(x)\d x$ exist.
	\end{enumerate}
\end{definition}

As mentioned in the introduction, the space of probability measures endowed with $d_2$ has proven to be an exceptional choice of a metric space. In this case the minimiser in Eq. \eqref{eq:JKO_intro} satisfies the so-called optimality condition
\begin{align}
	\label{eq:optimality_condition}
	\frac{\varphi}{\tau} + \chi'(\sigma^{n+1}) = const.,
\end{align}
\emph{cf.} \cite[Proposition 7.20]{Snt_B}, for instance. Here $\varphi$ denotes the associated Kantorovich potential \cite[Theorem 1.17]{Snt_B}. Notice that this optimality is only obtained in those references for non-degenerate problems, where $\beta'(0^+)>0$. However, similar approximation techniques as those developed in \cite[p. 156]{Otto} and \cite[p. 27]{CJMTU} allow to overcome the difficulties associated to degenerate diffusions, where $\beta'(0^+)=0$.
In the rest of the paper, we will proceed as if we were dealing with nonlinearities leading to nondegenerate diffusions since by this standard approximation procedure, the same result can be obtained for the degenerate ones. We are now ready to introduce our notion of weak solutions. 

\begin{definition}[Notion of weak solutions]\label{def:weak_sol}
A couple $\rho,\eta\in  C\big( 0,T; BV(\Omega)\cap L^{\infty}(\Omega)\big)$ is a weak solution to system \eqref{eq:complete} if $\sigma=\rho+\eta\in L^2(0,T;H^1(\Omega))$  and there holds 
\begin{align*}
 \int_\Omega(\rho(t)-\rho(s))\,\zeta\d x &= \int_s^t\int_\Omega -\rho\partial_x\chi'(\rho+\eta)\partial_{x} \zeta + \left(\rho F_1(\rho,\eta)+\eta G_1(\rho,\eta)\right) \zeta \d x \d\bar\tau,\\
 \int_\Omega(\eta(t)-\eta(s))\,\xi\d x &= \int_s^t\int_\Omega -\eta\partial_x\chi'(\rho+\eta)\partial_{x} \xi + \left(\eta F_2(\rho,\eta)+\rho G_2(\rho,\eta)\right) \xi \d x \d\bar\tau,
\end{align*} 
for any two test functions $\zeta, \xi \in C_c^\infty(\Omega)$.
\end{definition}

\begin{remark}
\mbox{}
\\
\indent
\label{rem:K_geodesic_convex}
\textit{1.} Notice that the functional 
	\begin{align*}
		\curlyK(\sigma) := \int_\Omega K(\sigma)\d x,
	\end{align*}
associated to $\chi$, as in Definition \ref{def:feasible_NLD}, satisfies the McCann condition since $K''(s)=\tfrac{\chi''(s)}s>0$, and is therefore displacement convex. This fact will be crucial in Section \ref{sec:combi_estim} in order to prove Lemma \ref{lem:uniformH1_bound}. Let us just state here that it is necessary to obtain additional regularity from the dissipation of this functional on the nonlinear diffusion term, thus allowing us to pass to the limit in the approximating sequence.

\textit{2.}
	Let us note that we can also allow for nonlocal reaction terms, \emph{i.e.}
	\begin{align*}
		F_i = W_{1, i}\star \rho + W_{2,i}\star \eta,
	\end{align*}
	and similarly for $G_i$, for $i=1,2$. The only assumption we need to impose on the kernels is that they are smooth and integrable. These models are found in modelling  pattern formation, as for instance the kernel-based Turing pattern system \cite{Kon17} or the nonlinear aggregation-diffusion system \cite{VS}.

\textit{3.} Similarly, we can -- at least formally -- interpret the system \eqref{eq:cross_diff} as a gradient flow of the functional
\begin{equation*}
	\eE(\rho, \eta)=\int_\Omega \chi(\rho+\eta) \d x,
\end{equation*}	
in the product Wasserstein space.
\end{remark}

\subsection{Splitting scheme}
We are now ready to introduce our splitting scheme for equation \eqref{eq:complete}. Let some initial data $\rho_0,\eta_0\in BV(\Omega)\cap L^{\infty}(\Omega)$ be given such that there exists a function $r_0 \in BV(\Omega)$ such that
\begin{align*}
	\sigma_0:=\rho_0+\eta_0,\qquad\mbox{ and } \qquad  \frac{\rho_0}{\sigma_0}=r_0\big|_{\{\sigma_0>0\}},
\end{align*}
and $0\leq r_0 \leq1$. Furthermore we assume $F_i$ and $G_i$, $i=1,2$ are bounded and Lipschitz with respect to $\rho$ and $\eta$ and we impose $G_1(0,\cdot)\geq0$ and $G_2(\cdot,0)\geq0$ to ensure positivity of solutions.\\

We fix $0<\tau<1$ and $n\in \{1,\ldots, N\}$, with $N\in \N$ such that $N\tau=T$. We then recursively construct the piecewise constant approximation to the system as follows. We impose
\[
 (\rho_\tau^0,\eta_\tau^0)=(\rho_0,\eta_0),
\]
and then construct $\Unp=(\rhonp,\etanp)$ by the following scheme. We split the equation into a reaction step and a diffusion step on the time interval $[t^{n}, t^{n+1})$, with $t^n = n \tau$, for all $1\leq n\leq N$.

\subsubsection{Reaction step}
The reaction phase consists of solving the system of ordinary differential equations
\begin{align*}
\left\{
\begin{array}{l}
	\partial_t \sigma = \Sigma(\sigma, r):= \sigma \big(r A_1 +(1-r)A_2\big),\\[0.5em]
	\partial_t r = R(\sigma, r):=r(1-r) A_3 +(1-r)^2\tilde{G}_1-r^2\tilde{G}_2,\\[0.5em]
	\sigma(t^n)=\sigma^n, \quad\mbox{and}\quad r(t^n)=r^n,
\end{array}
\right.
\end{align*}
in the time interval $[t^n, t^{n+1})$.  We then set 
\begin{subequations}
\begin{align}
\label{eq:r_step}
\rhohalf := r\sigma\big|_{t= (n+1)\tau}, \quad \mbox{and}\quad \etahalf := (1-r)\sigma\big|_{t=(n+1)\tau}.
\end{align}
A straightforward computation reveals 
\begin{align}
	\label{eq:rsigma_eqn}
	\begin{split}
	\partial_t (r \sigma) &= r\sigma \tilde F_1 + (1-r)\sigma \tilde G_1,\\
	\partial_t ((1-r) \sigma) &= (1-r)\sigma \tilde F_2 + r\sigma \tilde G_2,
	\end{split}
\end{align}
verifying that $r\sigma$ and $(1-r)\sigma$ solve the reaction part of Eq. \eqref{eq:complete}.

\subsubsection{Diffusion step}
After the reaction phase we solve 
\begin{equation}\label{eq:JKO}
  \Unp\in \argmin\limits_{U\in\curlyP^{m_1}(\Omega)\times \curlyP^{m_2}(\Omega)}\left\{\frac{1}{2\tau}\Wt\left(U,\Uhalf\right)+\eE(U)\right\},
\end{equation}
where $m_1=\rhohalf(\Omega)$ and $m_2 = \etahalf(\Omega)$. Let $\curlyT$ denote the associated transport map pushing, \emph{i.e.} $\curlyT_\#\Unp=\Uhalf$. We define
\begin{align}
	\label{eq:JKOrsigma}
	\sigmanp:=\rhonp+\etanp, \qquad\mbox{ and }\qquad \rnp:=\rhalf\circ\curlyT.
\end{align}
\end{subequations}
While the definition of $\sigmanp$ is somewhat natural, the definition of $\rnp$ seems a bit surprising. Let us note here that, indeed, $\rnp = \rhonp/\sigmanp$ where $\sigmanp>0$. For the precise argument we refer to Section \ref{sec:estim_diffu}, Eq. \eqref{eq:composition_with_monotone_T}. 

\subsubsection{Combination of both steps -- construction of a solution}
Throughout this paper we refer to Eq. \eqref{eq:r_step} as \emph{reaction step} and to Eq. \eqref{eq:JKO} as \emph{diffusion step}, respectively. 
\begin{definition}\label{def:piec_con}[Piecewise constant interpolation]
	Let $(\rn, \sigman)_{n\in\N}$ be the sequen\-ce obtained from the splitting scheme. Then we define the piecewise constant interpolations by
	\begin{align*}
    	r_\tau(t,x) = \rn(x), \quad \text{and} \quad \sigma_\tau(t,x) = \sigman (x),
    \end{align*}
as well as
    \begin{align*}
    	\rho_\tau(t,x) = \rn(x)\sigman(x), \quad \text{and} \quad \eta_\tau(t,x) = (1-\rn (x))\sigman(x),
    \end{align*}
    for all $(x,t)\in \Omega \times [t^n,t^{n+1})$. Furthermore we write $U_\tau := (\rho_\tau,\eta_\tau)$.
\end{definition}

We will say that two densities $\rho,\eta\in BV(\Omega)\cap L^\infty(\Omega)$ are segregated if the intersection of the interior of their supports is empty. We are now ready to state our main result.

\begin{theorem}[Convergence to weak solutions] Let $\rho_0,\eta_0\in BV(\Omega)\cap L^\infty(\Omega)$ and assume there exists a function $r_0\in BV(\Omega)$ such that $r_0 = \rho_0/(\rho_0+\eta_0)$ on $\{\rho_0+\eta_0>0\}$ and $0\leq r_0\leq 1$. Then, upon the extraction of a subsequence, the piecewise constant approximations $(\rho_\tau)_{\tau>0}$ and $(\eta_\tau)_{\tau>0}$ converge to a weak solution of system \eqref{eq:complete} in the sense of Definition \ref{def:weak_sol}. Moreover, if initially the two densities $\rho_0,\eta_0$ are segregated, then the limit densities $\rho(t,\cdot),\eta(t,\cdot)$ remain segregated for all times.
\end{theorem}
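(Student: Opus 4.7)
The plan is to combine compactness of the piecewise constant interpolations $(\rho_\tau,\eta_\tau)$ with a limit passage in the weak formulation of \eqref{eq:complete}, and to handle segregation at the discrete level using the transport structure of the diffusion step together with an ODE invariance of the reaction step. I would work primarily with the transformed pair $(\sigma^n, r^n)$, since on that side system \eqref{eq:transf_sys1} decouples nicely: diffusion only touches $\sigma$, while $r$ is purely transported in the JKO step and reacts pointwise in the ODE step.

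The first block of estimates is uniform-in-$\tau$ control of $\Linfty{\sigma_\tau}$, $\BV{\sigma_\tau}$, $\BV{r_\tau}$, together with $0 \leq r_\tau \leq 1$. The reaction sub-step is a coupled system of ODEs parameterised by $x$, so these bounds propagate by Gronwall estimates, and the sign conditions $G_1(0,\cdot)\geq 0$, $G_2(\cdot,0)\geq 0$ keep $r^{n+1/2}\in[0,1]$. The diffusion sub-step is the JKO scheme for the porous-medium-type equation \eqref{eq:diff_sigma}, which preserves $L^\infty$-bounds by comparison and the $BV$-seminorm (the total variation is displacement-convex on the line); the $BV$-norm of $r$ is unchanged since $\rnp=\rhalf\circ\curlyT$ is composition with a monotone map, cf.\ \eqref{eq:JKOrsigma}. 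Telescoping the minimising-movement inequality over $[0,T]$ yields the standard $\sum_n \Wt(\Unp,\Uhalf)/\tau = \curlyO(1)$ bound.

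The main analytical obstacle is the $L^2(0,T;H^1(\Omega))$-regularity on $\sigma_\tau$ demanded by Definition \ref{def:weak_sol} and needed to pass to the limit in the cross-diffusion term $\rho\,\partial_x\chi'(\rho+\eta)$. Here I would exploit the displacement convexity of the auxiliary functional $\curlyK$ recorded in Remark \ref{rem:K_geodesic_convex} through a flow-interchange argument \emph{\`a la} Matthes--McCann--Savar\'e: differentiating $\curlyK$ along the gradient flow of $\eE$ yields the dissipation $|\partial_x\chi'(\sigma)|^2$, and inserting heat-flow competitors at the discrete level produces
\begin{equation*}
\int_0^T\!\!\int_\Omega |\partial_x\chi'(\sigma_\tau)|^2\,\d x\,\d t \leq C.
\end{equation*}
Combining spatial $BV$-compactness with time-equicontinuity in $\Bl$ -- obtained from the $d_2$-telescoping on diffusion steps and an $L^1$-Lipschitz estimate on reaction steps, glued together via Corollary \ref{cor:triangular:_inequality} -- and invoking an Aubin--Lions type argument, one extracts a subsequence with $(\rho_\tau, \eta_\tau)\to(\rho,\eta)$ strongly in $L^1((0,T)\times\Omega)$ and $\chi'(\sigma_\tau)\rightharpoonup \chi'(\sigma)$ weakly in $L^2(0,T;H^1(\Omega))$. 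Passage to the limit in the weak formulation then splits into routine pieces: reaction terms converge by $L^1$-strong convergence of $U_\tau$ and the boundedness and Lipschitz continuity of $F_i, G_i$, while the diffusion term converges by the strong-weak product rule in $L^2$.

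For the segregation statement the key observation is that $r$ evolves only by composition with the monotone optimal map $\curlyT$ in the diffusion step and by a pointwise ODE in the reaction step, neither of which can create intermediate values where none existed. Indeed, if $r_0(x)\in\{0,1\}$ a.e.\ on $\{\sigma_0>0\}$, then on regions where one species is absent the ODE for $r$ -- thanks to the structural cancellations in \eqref{eq:transf_sys1} and the sign conditions on $G_1, G_2$ -- keeps the endpoints $\{0,1\}$ invariant, so $\rhalf\in\{0,1\}$ on $\{\sigmahalf>0\}$; composition with $\curlyT$ then delivers $\rnp=\rhalf\circ\curlyT\in\{0,1\}$ a.e.\ on $\{\sigmanp>0\}$. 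Iterating in $n$ and passing to the $L^1$ limit gives $r(t,\cdot)\in\{0,1\}$ a.e.\ on $\{\sigma(t,\cdot)>0\}$, which is precisely the essential disjointness of the supports of $\rho(t,\cdot)$ and $\eta(t,\cdot)$. I expect the flow-interchange estimate to be the hardest technical step, while the segregation argument is essentially geometric and crucially requires no lower bound on $\sigma$, thereby removing the strict positivity hypothesis of \cite{BerDaPMim,BHIM12}.
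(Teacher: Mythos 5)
Your overall architecture coincides with the paper's: the same $(\sigma,r)$ reformulation, Gronwall-type $L^\infty$ and $BV$ bounds for the reaction ODEs, $L^\infty$/$BV$ stability of the JKO step (the paper proves the $L^\infty$ bound via the optimality condition at an argmax rather than by a comparison principle, and cites \cite{DPMSV16} for the $BV$ decay of $\sigma$), the telescoped bound $\sum_n \Wt(\Uhalf,\Unp)/\tau\leq C$, the dissipation of the displacement-convex auxiliary functional $\curlyK$ to obtain $\partial_x\chi'(\sigma_\tau)\in L^2$, $\Bl$-equicontinuity glued via Corollary \ref{cor:triangular:_inequality} plus the refined Ascoli--Arzel\`a theorem, and limit identification by Taylor expansion against test functions. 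The only methodological variation in the existence part is minor: the paper obtains the entropy-dissipation inequality by differentiating $\curlyK$ along the optimal geodesic from $\sigmanp$ to $\sigmahalf$ and invoking the above-the-tangent inequality together with the Euler--Lagrange condition $\varphi/\tau+\chi'(\sigmanp)=\mathrm{const}$, rather than by a Matthes--McCann--Savar\'e flow interchange; both mechanisms require the same displacement convexity of $\curlyK$ and yield the identical bound $\tau\|\partial_x\chi'(\sigmanp)\|_{L^2}^2\leq\curlyK(\sigmahalf)-\curlyK(\sigmanp)$.

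The one step that would fail as written is the claim that the reaction ODE ``keeps the endpoints $\{0,1\}$ invariant'' thanks to the sign conditions on $G_1,G_2$. At $r=0$ the ODE reads $\partial_t r=\tilde G_1\geq0$ and at $r=1$ it reads $\partial_t r=-\tilde G_2\leq0$: the sign conditions preserve the interval $[0,1]$ but push $r$ \emph{into} the interior whenever a cross-reaction term is strictly positive, so segregation is genuinely destroyed by cross-reactions (a term $\eta\, G_1(\rho,\eta)$ creates mass of $\rho$ wherever $\eta$ lives). The endpoints are invariant only under the additional hypothesis $G_1\equiv G_2\equiv0$, which is exactly what the paper imposes in its Theorem \ref{thm:segregation}; you must state this restriction explicitly for your segregation argument to close. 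Granting it, your mechanism ($r$ is $\{0,1\}$-valued on $\{\sigma>0\}$, and $\rnp=\rhalf\circ\curlyT$ preserves the range of $\rhalf$) is equivalent to, and arguably cleaner than, the paper's contradiction argument, which instead assumes $\int_\Omega\rhonp\etanp\,\d x>0$ and pushes a positive-measure set forward through the common monotone map $\curlyT$ to contradict $\int_\Omega\rhohalf\etahalf\,\d x=0$; both rest on the same key fact that a single optimal map transports both species.
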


\section{\label{sec:proof_main_result}Proof of the main result}
This section is dedicated to proving the main result of the paper -- the convergence of the approximation obtained by the splitting scheme to a solution of the system. It is organised as follows: in Section \ref{sec:estim_reac} and \ref{sec:estim_diffu} we establish the crucial BV-estimates and $L^\infty$-bounds. In Section \ref{sec:combi_estim} we combine the estimates from the previous sections in order to get uniform estimates for a whole iteration. Finally, in Section \ref{sec:convergence} we show how to extract a convergent subsequence and identify its limit as a weak solution to system \eqref{eq:complete}.

\subsection{Estimates for reaction step\label{sec:estim_reac}}
Since the right-hand sides, $\Sigma(\sigma,r), R(\sigma,r)$ are Lipschitz continuous in both components we note that the solution of the reaction system is unique. 
\begin{proposition}[$L^\infty$ estimates of the reaction step]
\label{prop:linfty_reaction}
Let $(\rn, \sigman)$ be given by our splitting scheme. Then there holds
\begin{align*}
	 0\leq\sigmahalf \leq \Linfty{\sigman} \exp(c\tau), \mbox{ and } 0\leq \rhalf \leq 1.
\end{align*}
\end{proposition}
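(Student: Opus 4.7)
The reaction step decouples pointwise in $x\in\Omega$, so I would fix $x$ and treat it as an initial value problem in $\R^2$ for $(\sigma(t),r(t))$ with data $(\sigman(x),\rn(x))$. Both claimed bounds will be proven first pointwise and then lifted to $L^\infty$ estimates. The structural fact I plan to exploit throughout is the uniform boundedness of all reaction coefficients $A_1,A_2,A_3,\tilde{G}_1,\tilde{G}_2$, which is a direct consequence of the standing hypotheses on $F_i,G_i$.

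For the $r$-bound I would invoke an invariant region argument on $[0,1]$. Evaluating $R$ at the endpoints yields
\[
R(\sigma,0)=\tilde{G}_1(\sigma,0)=G_1(0,\sigma)\geq 0,\qquad R(\sigma,1)=-\tilde{G}_2(\sigma,1)=-G_2(\sigma,0)\leq 0,
\]
which uses precisely the assumptions $G_1(0,\cdot)\geq 0$ and $G_2(\cdot,0)\geq 0$ imposed on the reaction terms. A standard first-exit-time argument then rules out $r$ ever leaving $[0,1]$: if $t^\ast$ were the first time a trajectory starting in $[0,1]$ reached the boundary, the vector field at $(\sigma(t^\ast),r(t^\ast))$ would point inward (or be tangential), contradicting the definition of $t^\ast$.

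Once $r(t)\in[0,1]$ is known, the equation for $\sigma$ reduces to $\partial_t\sigma=b(t)\sigma$ with $b(t):=r(t)A_1+(1-r(t))A_2$, and the convex combination obeys $|b(t)|\leq c:=\max(\Linfty{A_1},\Linfty{A_2})$. Direct integration then gives $\sigma(t,x)=\sigman(x)\exp\!\bigl(\int_{t^n}^{t} b(s,x)\d s\bigr)$, which both preserves non-negativity (inductively from $\sigma_0\geq 0$) and yields the pointwise upper bound $\sigma(t,x)\leq\sigman(x)\,e^{c\tau}$. Taking the essential supremum in $x$ and setting $t=t^{n+1}$ recovers the claimed $L^\infty$ estimate $\sigmahalf\leq\Linfty{\sigman}e^{c\tau}$.

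The main hurdle here is genuinely mild: it amounts to selecting the correct constant $c$ that reflects the uniform bounds on the reaction coefficients, and carefully invoking the boundary sign conditions on $G_1,G_2$ to close the invariant region at the two endpoints of $[0,1]$. Deeper technical difficulties are not expected until the diffusion step and the combined BV-estimates treated in the subsequent sections.
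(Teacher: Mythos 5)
Your proposal is correct and follows essentially the same route as the paper: the sign conditions $G_1(0,\cdot)\geq 0$, $G_2(\cdot,0)\geq 0$ keep $r$ in the invariant interval $[0,1]$ (the paper phrases this as a first-crossing-time contradiction rather than an invariant-region statement, but it is the same argument), and the bound on $\sigma$ then follows from the boundedness of $rA_1+(1-r)A_2$. Your use of the explicit exponential representation $\sigma(t)=\sigman\exp\bigl(\int_{t^n}^{t}b(s)\d s\bigr)$, which delivers non-negativity and the upper bound simultaneously, is a minor streamlining of the paper's separate contradiction-plus-Gronwall steps, not a different method.
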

\begin{proof}
We show first that there holds $\rhalf \in [0,1]$. Assume the contrary, \emph{i.e.} $\rhalf<0$ or $\rhalf>1$. If $\rhalf<0$ then, by continuity, there exists a time $t^\star\in(t^n,t^{n+1})$ such that $r(t^\star)=0$ and $\partial_t r(t^\star) < 0$. However, this is absurd as
\begin{align*}
	0 > \partial_t r(t^\star) = R\big(\sigma(t^\star), r(t^\star)\big) = \tilde G_1\big(\sigma(t^\star), 0\big) \geq 0.
\end{align*}
Analogously, it can be shown that $\rnp\leq1$. Finally, for the positivity of $\sigma$ we can use a similar argument. Let us assume $\sigman\geq0$ and $\sigmahalf<0$. Then, there exists another $t^\star$ such that
\begin{align*}
	0> \partial_t\sigma(t^\star) = \Sigma\big(\sigma(t^\star), r(t^\star)\big) = 0,
\end{align*}
which clearly is a contradiction. For the $L^\infty$-bound we simply apply Gronwall's lemma and the fact that $r\in[0,1]$.
\end{proof}

Next we address the BV-estimates during the reaction step. 


\begin{proposition}[Bounded variation of $\rhalf$ and $\sigmahalf$]
\label{prop:bv_reaction}
Let us consider $(\rn, \sigman)$ as initial data for our splitting scheme. Then,  the reaction step is BV-stable in the following sense.
	\begin{align*}
		\BV{\rhalf} + \BV{\sigmahalf} \leq (\BV{\rn} + \BV{\sigman})\,\exp(c \tau), 
	\end{align*}
    for some positive $K$, depending only on the Lipschitz constants of $F_i,G_i$ and the $L^\infty$-bounds on $F_i,G_i$, for $i=1,2$.
\end{proposition}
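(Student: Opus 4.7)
The strategy is to exploit the Lipschitz character of the right-hand sides $\Sigma$ and $R$ of the reaction ODE on the invariant region $\{0\leq \sigma \leq M,\; 0\leq r\leq 1\}$, where $M := \Linfty{\sigman}\exp(c\tau)$ is the bound furnished by Proposition \ref{prop:linfty_reaction}. Since $F_i,G_i$ are bounded and Lipschitz in $(\rho,\eta)$, the composed quantities $A_1,A_2,A_3,\tilde G_1,\tilde G_2$ are bounded and Lipschitz in $(\sigma,r)$ on this region, and as products of bounded Lipschitz functions, $\Sigma$ and $R$ themselves are Lipschitz there: there exists $L>0$, depending only on the $L^\infty$- and Lipschitz-norms of $F_i,G_i$ and on $M$, such that
\begin{equation*}
|\Sigma(\sigma_1,r_1)-\Sigma(\sigma_2,r_2)|+|R(\sigma_1,r_1)-R(\sigma_2,r_2)|\leq L\bigl(|\sigma_1-\sigma_2|+|r_1-r_2|\bigr).
\end{equation*}

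Next, I would fix a finite partition $P=\{x_1<\cdots<x_{|P|}\}\subset \Omega$ and, for $t\in[t^n,t^{n+1}]$, set
\begin{equation*}
g_i(t):=\sigma(t,x_{i+1})-\sigma(t,x_i),\qquad h_i(t):=r(t,x_{i+1})-r(t,x_i).
\end{equation*}
Since the ODE is solved pointwise in $x$, both $g_i$ and $h_i$ are absolutely continuous in $t$ and the Lipschitz estimate above yields
\begin{equation*}
|\dot g_i(t)|+|\dot h_i(t)|\leq L\bigl(|g_i(t)|+|h_i(t)|\bigr).
\end{equation*}
Using $\ddt|f|\leq|\dot f|$ a.e.\ for absolutely continuous $f$ and summing over $i$ produces the differential inequality
\begin{equation*}
\ddt\bigl(V_P(\sigma(t,\cdot))+V_P(r(t,\cdot))\bigr)\leq L\bigl(V_P(\sigma(t,\cdot))+V_P(r(t,\cdot))\bigr).
\end{equation*}
Gronwall's lemma on $[t^n,t^{n+1}]$ together with $V_P\leq \BV{\cdot}$ then gives
\begin{equation*}
V_P(\sigmahalf)+V_P(\rhalf)\leq \bigl(\BV{\sigman}+\BV{\rn}\bigr)\exp(L\tau).
\end{equation*}

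To conclude, I would remove the dependence on $P$ by the usual refinement argument: for any $\varepsilon>0$, pick partitions $P_1,P_2$ with $V_{P_1}(\sigmahalf)\geq \BV{\sigmahalf}-\varepsilon$ and $V_{P_2}(\rhalf)\geq \BV{\rhalf}-\varepsilon$, apply the previous inequality on the common refinement $P:=P_1\cup P_2$ (which only increases each $V_P$), and let $\varepsilon\downarrow 0$ to obtain the claim with $c:=L$. The main (mild) technical point is justifying the differentiation of $|g_i|$ and $|h_i|$ at times where they vanish; this is handled either by the absolute-continuity argument above, or by first estimating the smoothed quantities $\sqrt{g_i^2+\delta^2}+\sqrt{h_i^2+\delta^2}$ and letting $\delta\downarrow 0$. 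Everything else is the standard Lipschitz-plus-Gronwall template, performed uniformly in the spatial variable.
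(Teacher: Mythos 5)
Your proposal is correct and follows essentially the same route as the paper: control the partition-wise variation of the right-hand sides $\Sigma,R$ by their Lipschitz constants times $V_P(\sigma)+V_P(r)$, apply Gronwall on $[t^n,t^{n+1}]$, and pass to the supremum over partitions. The only cosmetic difference is that the paper works with the integral (Duhamel) form of the ODE, which sidesteps the differentiation-of-$|g_i|$ technicality you handle via absolute continuity or smoothing.
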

\begin{proof}
	Using the transformed system, Eqs.\eqref{eq:transf_sys1}, $r$ and $\sigma$ satisfy the following equations in the reaction step
    \begin{align*}
    	\partial_t \sigma = \Sigma(\sigma, r), \quad \mbox{ and }\quad\partial_t r = R(\sigma, r).
    \end{align*}
    Upon integrating in time we get
    \begin{align*}
    	\sigma(t) = \sigma(s) + \int_s^t \Sigma\big(\sigma(\bar\tau), r(\bar \tau)\big) \d \bar \tau, \quad \mbox{and} \quad     	r(t) = r(s) + \int_s^t R\big(\sigma(\bar\tau), r(\bar \tau)\big) \d \bar \tau.
    \end{align*}
    Now, let $P\subset \Omega$ be an arbitrary partition. We compute the variation of $\sigma$ and $r$ with respect to $P$ and obtain
    \begin{align*}
    	Q(t):= V_P(\sigma(t)) + V_P(r(t)) \leq V_P(\sigma(s)) + V_P(r(s)) + \int_s^t V_P(\Sigma(\bar \tau)) + V_P(R(\bar \tau))\d \bar \tau,
    \end{align*}
    whence
	\begin{align*}
		Q(t) \leq Q(s) + c\int_s^t Q(\bar \tau) \d \bar \tau,
	\end{align*}
	where $c$ only depends on the $L^\infty$-bounds and the Lipschitz-continuity of $A_i$, for $\in\{1,2,3\}$ and the $L^\infty$-bounds of $\sigma$ and $r$.
	Applying Gronwall's lemma we finally obtain
	\begin{align*}
		Q(t)\leq Q(s)\exp\big(c(t-s)\big).
	\end{align*}
	Passing to the supremum on the right-hand side and then on the left-hand side yields the result.

\end{proof}

\subsection{Estimates for diffusive step\label{sec:estim_diffu}}
This section is devoted to establishing $BV$-estimates and $L^\infty$-bounds in the diffusive step. To this end we will make use of the following remark.
\begin{remark}[Same optimality conditions]\label{rem:opt}
Let $\rhonp$ and $\etanp$ be given by the JKO step for $\mathcal{E}$, \emph{cf.} \eqref{eq:JKO}. Then, the optimality conditions, Eq. \eqref{eq:optimality_condition}, take the following form:
\begin{equation*}
	\frac{\delta \mathcal{E}}{\delta \rho} + \frac{\varphi_\rho}{\tau} = c_1, \quad \frac{\delta \mathcal{E}}{\delta \eta} + \frac{\varphi_\eta}{\tau} = c_2, \quad \text{and} \quad
  \mbox{with }  \frac{\delta \mathcal{E}}{\delta \rho} = \frac{\delta \mathcal{E}}{\delta \eta} = \chii(\sigma^{n+1}),
\end{equation*}
for some constants $c_1$ and $c_2$, and the respective Kantorovich potentials $\varphi_\rho$ (from $\rhonp$ to $\rhohalf$) and $\varphi_\eta$ (from $\etanp$ to $\etahalf$), see \cite{Snt_B}. Notice that the previous identities imply that $\varphi_\rho=\varphi_\eta=\varphi$ up to an additive constant. Moreover, the uniqueness of the optimal map implies that $\curlyT(x) = (\id - \partial_x \varphi)(x)$ is also the optimal map from $\sigmanp$ to $\sigmahalf$, and $\varphi$ is the corresponding Kantorovich potential.
\end{remark}

\begin{proposition}[$L^\infty$ stability of the diffusive step]
\label{prop:linfty_diffusion}
Let $(\rhalf, \sigmahalf)$ be given by the splitting scheme \eqref{eq:r_step}. Then these quantities satisfy
\begin{align*}
	0 \leq \sigmanp \leq \Linfty{\sigmahalf},\quad \text{ and } \quad 0 \leq \rnp \leq 1,
\end{align*}
after the diffusion step for any $1\leq n \leq N$.
\end{proposition}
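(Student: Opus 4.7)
The statement decomposes into an essentially free bound on $r^{n+1}$ and a genuine maximum-principle bound on $\sigma^{n+1}$. For $r^{n+1}$, the construction in \eqref{eq:JKOrsigma} gives $\rnp=\rhalf\circ\curlyT$, where $\curlyT$ is the (monotone, in 1D) optimal transport map pushing $\sigma^{n+1}$ to $\sigma^{n+1/2}$. Since Proposition~\ref{prop:linfty_reaction} yields $0\leq\rhalf\leq 1$, the composition automatically satisfies the same pointwise inequalities on the support of $\sigma^{n+1}$. Moreover, the non-negativity $\sigma^{n+1}\geq 0$ is immediate because the JKO problem \eqref{eq:JKO} is posed over non-negative measures in $\curlyP^{m_1}(\Omega)\times\curlyP^{m_2}(\Omega)$.

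The content of the proposition is therefore the upper bound $\sigma^{n+1}\leq\|\sigma^{n+1/2}\|_{L^\infty}$. I would work with the Euler--Lagrange condition recalled in Remark~\ref{rem:opt}, namely $\chi'(\sigma^{n+1})+\varphi/\tau=c$ on $\{\sigma^{n+1}>0\}$ with $\curlyT(x)=x-\partial_x\varphi(x)$. Since $\chi'$ is continuous and strictly monotone (by $\chi''>0$), this identity transfers the regularity of $\varphi$ to $\sigma^{n+1}$, so the latter can be evaluated and differentiated pointwise on its open support. In 1D, the change-of-variable identity reads $\sigma^{n+1}(x)=\sigma^{n+1/2}(\curlyT(x))\,\curlyT'(x)$. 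At an interior maximum $x_0$ of $\sigma^{n+1}$, a first differentiation of the optimality condition forces $\partial_x\varphi(x_0)=0$, hence $\curlyT(x_0)=x_0$, while a second differentiation combined with $\partial_{xx}\sigma^{n+1}(x_0)\leq 0$ forces $\partial_{xx}\varphi(x_0)\geq 0$, equivalently $\curlyT'(x_0)\leq 1$. Inserting these in the change-of-variable formula yields
\begin{equation*}
	\sigma^{n+1}(x_0)=\sigma^{n+1/2}(x_0)\,\curlyT'(x_0)\leq \|\sigma^{n+1/2}\|_{L^\infty},
\end{equation*}
and maxima located at $\partial\Omega$ are handled in the same spirit using the no-flux condition from \eqref{eq:complete}.

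The main obstacle is justifying this pointwise computation given that the natural regularity of $\sigma^{n+1}$ is only $BV\cap L^\infty$ and $\curlyT$ need only be monotone (not $C^1$). The most direct fix is the non-degenerate approximation procedure already mentioned between Remark~\ref{rem:opt} and Definition~\ref{def:weak_sol}: assuming temporarily $\beta'(0^+)>0$, the optimality condition yields enough regularity to run the computation, and one then passes to the degenerate limit. A more robust alternative which avoids second derivatives entirely is a competitor argument: writing $M:=\|\sigma^{n+1/2}\|_{L^\infty}$, one would redistribute the excess mass of $\{\sigma^{n+1}>M\}$ onto $\{\sigma^{n+1}<M\}$ and use the strict convexity of $\chi$ together with the monotonicity of $\curlyT$ in 1D to show that both $\eE$ and the quadratic transport cost strictly decrease, contradicting the minimality of $\sigma^{n+1}$ in \eqref{eq:JKO}.
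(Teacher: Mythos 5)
Your argument for the upper bound on $\sigma^{n+1}$ --- evaluating the optimality condition $\chi'(\sigma^{n+1})+\varphi/\tau=c$ at a maximum point $x_0$ to deduce $\curlyT'(x_0)=1-\partial_{xx}\varphi(x_0)\leq 1$ and concluding via the one-dimensional change-of-variables identity --- is exactly the paper's proof, and your treatment of $r^{n+1}$ (composition with the monotone optimal map preserves the infimum and supremum of $r^{n+1/2}\in[0,1]$) likewise coincides with it. The only cosmetic differences are that you obtain $\sigma^{n+1}\geq 0$ directly from the admissible class in \eqref{eq:JKO} rather than from $\curlyT'\geq 0$ via Brenier's theorem as the paper does, and that you make explicit the regularity caveat and the fallback competitor argument, whereas the paper silently relies on the non-degenerate approximation procedure it invokes after Remark~\ref{rem:opt}.
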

\begin{proof}
Choose $x_0 \in \argmax (\sigmanp)$. Then, by the optimality condition, Eq. \eqref{eq:optimality_condition},

\begin{align*}
	\chi'(\sigmanp) + \frac{\varphi}{\tau} = c,
\end{align*}
we have $x_0 \in \argmin(\varphi)$ where we used the fact that $\chi''\geq 0$, \emph{cf.} Definition \ref{def:feasible_NLD}, $(N\!L-iii)$. Hence $\varphi''(x_0)\geq 0$ and consequently, by passing to the derivative in Eq. \eqref{eq:rel_T_KantorovichPotential} we get
\begin{align}
	\label{eq:T_prime_less_than_one}
	\curlyT'(x_0) = 1 - \partial_{xx} \varphi(x_0) \leq 1.
\end{align}
where $\curlyT$ is the transport map from $\sigmanp$ to $\sigmahalf$.  After a change of variables we get
\begin{align*}
	\sigmanp(x) &\leq \|\sigmanp\|_{L^\infty}= \sigmanp(x_0)=\curlyT'(x_0) \sigmahalf(\curlyT(x_0))\!\stackrel{\eqref{eq:T_prime_less_than_one}}{\leq} \sigmahalf(\curlyT(x_0))\\
    & \leq  \|\sigmahalf\|_{L^\infty},
\end{align*}
for any $x\in \Omega$. For the non-negativity we observe that $\curlyT'\geq 0$ by Brenier's theorem \cite{Bre87, Bre91, Vil_1, Vil_2}. Thus
\begin{align*}
	\sigmanp(x)= \sigmahalf\circ \curlyT(x) \curlyT'(x)\geq 0.
\end{align*}
Finally the bounds for $\rnp$ follow from its definition, \emph{cf.} Eq. \eqref{eq:JKOrsigma}, as the composition with a monotone function does not change  the infimum and the supremum of a function. This concludes the proof.
\end{proof}

\begin{proposition}[Bounded variation of $\rnp$ and $\sigmanp$]
\label{prop:bv_diffusion}
Let $(\rhalf, \sigmahalf)$ be given. After the diffusive step they satisfy the following estimate.
\begin{align*}
	\BV{\sigmanp} \leq \BV{\sigmahalf}, \qquad \text{and} \qquad \BV{\rnp} \leq \BV{\rhalf}.
\end{align*}
\end{proposition}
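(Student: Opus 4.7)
The result combines two BV estimates for the single JKO (diffusive) step, one for $\rnp$ and one for $\sigmanp$. The starting point for both is Remark~\ref{rem:opt}: the same optimal transport map $\curlyT$ and Kantorovich potential $\varphi$ govern the transport of $\rhonp$, $\etanp$ and $\sigmanp$ onto their counterparts at step $n+1/2$. In one spatial dimension, Brenier's theorem guarantees $\curlyT$ is continuous and non-decreasing on its support, and this monotonicity will be the key structural ingredient for both estimates.

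The estimate $\BV{\rnp} \leq \BV{\rhalf}$ is essentially immediate from the definition $\rnp = \rhalf \circ \curlyT$. Indeed, for any partition $P = \{x_1 < \cdots < x_N\}$ of $\Omega$, monotonicity of $\curlyT$ gives $\curlyT(x_1) \leq \cdots \leq \curlyT(x_N)$, so
\[
V_P(\rhalf \circ \curlyT) = \sum_{i=1}^{N-1} \bigl|\rhalf(\curlyT(x_{i+1})) - \rhalf(\curlyT(x_i))\bigr| \leq V_{\curlyT(P)}(\rhalf) \leq \BV{\rhalf},
\]
and taking the supremum over $P$ yields the claim.

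The estimate $\BV{\sigmanp} \leq \BV{\sigmahalf}$ is the main obstacle: the change-of-variables identity $\sigmanp(x) = \curlyT'(x)\,\sigmahalf(\curlyT(x))$ features a Jacobian factor $\curlyT'$ that can \emph{a priori} create new variation, so the partition argument above no longer suffices. My plan is to first reduce to the scalar problem: by Remark~\ref{rem:opt}, $\sigmanp$ satisfies the optimality condition $\chi'(\sigmanp) + \varphi/\tau = \textnormal{const}$ associated to the \emph{scalar} JKO step for the nonlinear diffusion $\partial_t \sigma = \partial_{xx}\beta(\sigma)$ starting from $\sigmahalf$, and uniqueness of that scalar minimizer identifies the two objects. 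For the scalar step I would then invoke the $L^1$-contraction of the resolvent map, $\|\sigmanp_1 - \sigmanp_2\|_{L^1(\Omega)} \leq \|\sigmahalf_1 - \sigmahalf_2\|_{L^1(\Omega)}$, which is available because $\eE$ is displacement convex on the Wasserstein space and can be extracted directly from comparing the optimality conditions of two coupled JKO problems. Applying this contraction to $\sigmahalf$ and to a small spatial translation of it, then dividing by the translation length and passing to the limit, yields $\BV{\sigmanp} \leq \BV{\sigmahalf}$. The technical wrinkle is that translations are not directly admissible on the bounded interval $\Omega$ with the Neumann data in \eqref{eq:complete}; this is dealt with either by reflecting $\sigmahalf$ across $\partial\Omega$ or by testing with partitions compactly contained in $\Omega$ and exhausting, combined with the standard approximation by nondegenerate $\chi_\epsilon$ already discussed in the paragraph preceding Definition~\ref{def:weak_sol}.
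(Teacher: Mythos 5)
Your argument for $\BV{\rnp}\leq\BV{\rhalf}$ coincides with the paper's: $\rnp=\rhalf\circ\curlyT$ with $\curlyT$ monotone by Brenier's theorem, so every partition of $\Omega$ is sent to a (possibly degenerate) partition and the variation cannot increase. That half is correct.

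The half concerning $\sigmanp$ has a genuine gap. The reduction to the scalar JKO step via Remark~\ref{rem:opt} and strict convexity of $\chi$ is acceptable, but the step you then lean on --- that the scalar JKO map $g\mapsto\argmin\left\{\tfrac{1}{2\tau}\Wt(\cdot,g)+\int_\Omega\chi(\cdot)\d x\right\}$ is an $L^1$-contraction, and that this ``can be extracted directly from comparing the optimality conditions'' or follows ``because $\eE$ is displacement convex'' --- is unsubstantiated. Displacement convexity yields non-expansiveness of the JKO step in the $d_2$ metric, not in $L^1$; and comparing the two optimality conditions $\chi'(\sigma^{n+1}_i)+\varphi_i/\tau=c_i$ gives no pointwise or integral comparison of the densities, because the Kantorovich potentials $\varphi_i$ couple the two problems nonlocally. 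The $L^1$-contraction-plus-translation argument is classical for the \emph{continuous-time} semigroup of $\partial_t\sigma=\partial_{xx}\beta(\sigma)$, but it is precisely not available (or at least not evident) at the level of the discrete minimising-movement step; this is the reason the reference the paper relies on, De~Philippis--M\'esz\'aros--Santambrogio--Velichkov \cite{DPMSV16}, proves $\BV{\sigmanp}\leq\BV{\sigmahalf}$ by a direct argument on the optimality conditions (the ``five gradients inequality'', an integration by parts involving the gradients of both Kantorovich potentials and both densities), which moreover works on a bounded convex domain and thereby also disposes of your second, acknowledged difficulty with translations near $\partial\Omega$. To close the gap you would either have to actually prove the discrete $L^1$-contraction --- which is not established anywhere in your argument and should not be taken for granted --- or replace this step by the BV estimate of \cite{DPMSV16}, as the paper does.
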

\begin{proof}
The result for the BV-norm of the minimiser, $\sigmanp$, is shown analogously to the proof of  Theorem 1.1., \emph{cf.} \cite{DPMSV16}.
Now we need to show that the BV-norm of the ratio $r$ does not increase. Recall the definition of $r^{n+1}$, \emph{cf.} Eq. \eqref{eq:JKOrsigma}, as 
\begin{align*}
	r^{n+1}:=r^{n+1/2} \circ \curlyT,
\end{align*}
where $\curlyT$ is the transport map such that $\rhohalf=\curlyT_\#\rhonp$ and $\sigmahalf = \curlyT_\#\sigmanp$. Note that it is indeed the same function and there holds $\curlyT'\geq 0$, by Brenier's theorem \cite{Bre87, Bre91, Vil_1, Vil_2} .

Now, let  $P\subset\Omega$  be any partition of $\Omega$. There holds
\begin{align*}
	V_P(r^{n+1}) = V_P(r^{n+1/2} \circ \curlyT) = V_{P'}(r^{n+1/2}) \leq \BV{r^{n+1/2}},
\end{align*}
where $P'$ is another partition induced by the monotone map $\curlyT$. Taking the supremum over all partitions $P$, we get 
\begin{align*}
	\BV{r^{n+1}} \leq \BV{r^{n+1/2}}.
\end{align*}

Finally note that, indeed,
\begin{align*}
	\rnp(x)  = \frac{\rhonp(x)}{\sigmanp(x)},
\end{align*}
on $\supp(\sigmanp)$ as we shall see now. According to Remark \ref{rem:opt}, the  same transport map $\curlyT$ pushes $\rhonp$ onto $\rhohalf$ and $\sigmanp$ onto $\sigmahalf$. As a consequence the densities satisfy
\begin{align*}
	\rhonp(x) = \rhohalf\big(\curlyT(x)\big) \, \curlyT'(x), \quad \text{and} \quad \sigmanp(x) = \sigmahalf\big(\curlyT(x)\big)\, \curlyT'(x),
\end{align*}
whence
\begin{align}
\label{eq:composition_with_monotone_T}
	\rnp(x) :=  \rhalf\circ\curlyT(x)=\frac{\rhohalf}{\sigmahalf}\circ\curlyT (x) =\frac{\rhonp}{\sigmanp}(x).
\end{align}
\end{proof}

\subsection{Combined estimates for an entire splitting step\label{sec:combi_estim}}
We have now garnered all information necessary to pass to the limit. Let us combine the estimates from the previous section in the following lemma.
\begin{lemma}[\label{lem:BVLinfty}$BV$-estimates and $L^\infty$-estimates]
The sequence $(r_\tau, \sigma_\tau)_{\tau>0}$ \ obtained by the splitting scheme is uniformly bounded in $BV(\Omega)$ and $L^\infty(\Omega)$. More precisely there holds
\begin{align*}
	\sup\limits_{t\in [0,T]} \Linfty{r_\tau} \leq C, \qquad \mbox{and} \qquad \sup\limits_{t\in[0,T]} \Linfty{\sigma_\tau} \leq C,
\end{align*}
and
\begin{align*}
	\sup\limits_{t\in [0,T]}  \BV{r_\tau}\leq C, \qquad\mbox{ and }\qquad \sup\limits_{t\in [0,T]} \BV{\sigma_\tau} \leq C,
\end{align*}
for some positive constant $C<\infty$ only depending on $T$.
\end{lemma}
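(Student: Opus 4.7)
The plan is to chain the single-step estimates from Propositions \ref{prop:linfty_reaction}, \ref{prop:bv_reaction}, \ref{prop:linfty_diffusion}, and \ref{prop:bv_diffusion} across the $N = T/\tau$ iterations of the splitting scheme, and then transfer the resulting discrete bounds to the piecewise constant interpolants $(r_\tau,\sigma_\tau)$ given by Definition \ref{def:piec_con}. Since the piecewise constant interpolants are constant on each $[t^n,t^{n+1})$, any uniform-in-$n$ bound on the discrete sequence $(r^n,\sigma^n)$ transfers to a uniform-in-$t$ bound on the interpolant, so the supremum in $t$ in the statement reduces to a supremum over $n\in\{0,\dots,N\}$.

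For the $L^\infty$ bounds, I would combine Proposition \ref{prop:linfty_reaction} and Proposition \ref{prop:linfty_diffusion} to obtain, at each full step,
\begin{equation*}
0 \leq \sigma^{n+1} \leq \|\sigma^{n+1/2}\|_{L^\infty} \leq \|\sigma^n\|_{L^\infty} \exp(c\tau),
\end{equation*}
and $0 \leq r^{n+1} \leq 1$ (the ratio bound is preserved by both half-steps, being preserved by the ODE system on the one hand and by composition with the monotone optimal map $\mathcal{T}$ on the other). Iterating the first inequality from $n=0$ gives $\|\sigma^n\|_{L^\infty} \leq \|\sigma^0\|_{L^\infty} \exp(cn\tau) \leq \|\sigma_0\|_{L^\infty} \exp(cT)$, which together with $0 \le r^n \le 1$ yields the first pair of estimates with $C=C(T,\|\sigma_0\|_{L^\infty})$.

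For the $BV$ bounds, the same bookkeeping works but for the sum $Q^n := \|r^n\|_{BV} + \|\sigma^n\|_{BV}$, which is the natural quantity because the reaction estimate in Proposition \ref{prop:bv_reaction} couples $r$ and $\sigma$. Proposition \ref{prop:bv_reaction} gives $Q^{n+1/2} \leq Q^n\exp(c\tau)$, and Proposition \ref{prop:bv_diffusion} (where each of the two BV norms decreases separately under the JKO step) gives $Q^{n+1}\leq Q^{n+1/2}$. Composing and iterating yields
\begin{equation*}
Q^n \leq Q^0 \exp(cn\tau) \leq \big(\|r_0\|_{BV}+\|\sigma_0\|_{BV}\big)\exp(cT),
\end{equation*}
which is finite by the hypotheses on the initial data, and gives the claimed uniform $BV$ bounds separately on $r_\tau$ and $\sigma_\tau$ by subadditivity.

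There is no real obstacle here: the lemma is purely a consolidation step, and the only points that require a little care are (i) noting that the Lipschitz/$L^\infty$ constant $c$ appearing in both reaction propositions depends only on the structural bounds on $F_i,G_i$ and on a uniform $L^\infty$ bound on $(\sigma^n,r^n)$, so it is legitimate to use the same $c$ at every step once the $L^\infty$ bound is established first, and (ii) verifying that the factor $\exp(cn\tau)$ is bounded by $\exp(cT)$ uniformly in $\tau$ since $n\tau \leq N\tau = T$. Once these are in place, the constants $C$ depend only on $T$, on the initial data, and on the structural constants of the nonlinearities, as required.
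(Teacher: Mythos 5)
Your proof is correct and follows essentially the same route as the paper's: the paper's own argument is just a terse version of your chaining of Propositions \ref{prop:linfty_reaction}--\ref{prop:bv_diffusion}, establishing the $L^\infty$ bounds first so that the constant in the reaction-step $BV$ estimate is uniform, and then iterating the combined one-step inequality over $n\leq N=T/\tau$. Your added care about tracking the coupled quantity $\|r^n\|_{BV}+\|\sigma^n\|_{BV}$ and about the dependence of the constants is exactly the right bookkeeping that the paper leaves implicit.
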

\begin{proof}
	The uniform $L^\infty$-bound is a consequence of combining Propositions \ref{prop:linfty_reaction} and \ref{prop:linfty_diffusion}. We use these uniform $L^\infty$-bounds in the estimates for the $BV$-norm, \emph{cf.} Propositions \ref{prop:bv_reaction} and \ref{prop:bv_diffusion}. Combining both for the reaction and diffusion step we also obtain the uniform $BV$-bounds.
\end{proof}

As a result of Lemma \ref{lem:BVLinfty} and Lemma \ref{lem:Ralgebra} we obtain the following corollary.
\begin{corollary}[$BV$-estimates and $L^\infty$-estimates for $\rho, \eta$]
The sequences of approximated densities $(\rho_\tau)_{\tau>0}$, $ (\eta_\tau)_{\tau>0}$ are uniformly bounded in  $BV(\Omega)\cap L^\infty(\Omega)$. 
\end{corollary}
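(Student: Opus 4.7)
The plan is to deduce the corollary directly from the two preceding lemmas by exploiting the product structure of the approximating densities. Recall from Definition \ref{def:piec_con} that, on each time slab $[t^n,t^{n+1})$, we have the pointwise factorisations
\begin{equation*}
\rho_\tau(t,x)=r_\tau(t,x)\,\sigma_\tau(t,x),\qquad \eta_\tau(t,x)=\bigl(1-r_\tau(t,x)\bigr)\sigma_\tau(t,x).
\end{equation*}
Hence $\rho_\tau$ and $\eta_\tau$ belong to the real algebra $BV(\Omega)\cap L^\infty(\Omega)$ of Lemma \ref{lem:Ralgebra} as soon as $r_\tau$ and $\sigma_\tau$ do, which is exactly the content of Lemma \ref{lem:BVLinfty}.

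To obtain the quantitative uniform bound, I would invoke the standard $BV$ product inequality, valid on the algebra $BV(\Omega)\cap L^\infty(\Omega)$,
\begin{equation*}
\BV{fg}\leq \Linfty{f}\BV{g}+\Linfty{g}\BV{f},\qquad \Linfty{fg}\leq \Linfty{f}\Linfty{g},
\end{equation*}
whose proof consists in writing $f(x_{i+1})g(x_{i+1})-f(x_i)g(x_i)=f(x_{i+1})\bigl(g(x_{i+1})-g(x_i)\bigr)+g(x_i)\bigl(f(x_{i+1})-f(x_i)\bigr)$ for every partition $P$ of $\Omega$, applying the triangle inequality, and taking the supremum over $P$. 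Applied to $f=r_\tau$ and $g=\sigma_\tau$, this yields for every $t\in[0,T]$
\begin{equation*}
\BV{\rho_\tau(t,\cdot)}\leq \Linfty{r_\tau(t,\cdot)}\BV{\sigma_\tau(t,\cdot)}+\Linfty{\sigma_\tau(t,\cdot)}\BV{r_\tau(t,\cdot)},
\end{equation*}
and an identical estimate for $\eta_\tau$ with $r_\tau$ replaced by $1-r_\tau$, which has the same $BV$ semi-norm and an $L^\infty$ norm bounded by $1$.

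Taking the supremum in $t\in[0,T]$ and plugging in the four uniform constants provided by Lemma \ref{lem:BVLinfty} closes the argument with a constant depending only on $T$. There is no genuine obstacle here; the corollary is a direct transcription of Lemma \ref{lem:BVLinfty} through the algebra property of Lemma \ref{lem:Ralgebra}, and the only mild point worth highlighting in the write-up is that the bounds in Lemma \ref{lem:BVLinfty} are already uniform in time, so the passage from the pointwise-in-$t$ estimate to a $\sup_{t\in[0,T]}$-estimate is automatic.
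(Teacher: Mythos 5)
Your argument is correct and coincides with the paper's, which deduces the corollary from exactly the same two ingredients: the factorisations $\rho_\tau=r_\tau\sigma_\tau$, $\eta_\tau=(1-r_\tau)\sigma_\tau$ together with the algebra property of $BV(\Omega)\cap L^\infty(\Omega)$ (Lemma \ref{lem:Ralgebra}) and the uniform bounds of Lemma \ref{lem:BVLinfty}. You merely make explicit the product inequality that the paper leaves implicit.
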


Finally, we need to prove an estimate on the cross-diffusion term to be able to pass to the limit later. This estimate is achieved in Lemma \ref{lem:uniformH1_bound} which is preceded by two technical lemmas. We exploit the existence of an auxiliary functional guaranteed by Definition \ref{def:feasible_NLD}. Note that in the absence of the reaction part this would indeed be an entropy in the classical sense, \emph{i.e.} that it is decayed along solutions. Since we are interested in a uniform estimate we shall begin by proving a control of this functional during each reaction phase.

\begin{lemma}[\label{lem:H1_part1}Control of the auxiliary functional in the reaction step]
	$\curlyK$ increases at most at a constant rate independent of $n$. More precisely there holds
	\begin{align*}
		\curlyK(\sigma^{n+1/2}) \leq \curlyK(\sigma^n)+c\tau,
	\end{align*}
	for any $n\in\N$.
\end{lemma}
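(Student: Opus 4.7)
The plan is to differentiate $\mathcal{K}(\sigma(t))$ along the reaction ODE, show that the resulting time derivative is uniformly bounded on $[0,T]$, and integrate from $t^n$ to $t^{n+1}$. Using $\partial_t \sigma = \sigma\big(r A_1 + (1-r)A_2\big)$ together with $K'=\kappa$ (Definition \ref{def:feasible_NLD}, (NL-iii)), the chain rule gives
\begin{equation*}
\ddt \mathcal{K}(\sigma(t)) = \int_\Omega \kappa(\sigma)\,\sigma\,\big(r A_1 + (1-r)A_2\big)\d x.
\end{equation*}
Since $r\in[0,1]$ by Proposition \ref{prop:linfty_reaction} and $A_1,A_2$ are bounded (as linear combinations of $F_i,G_i\in L^\infty$), the factor $rA_1+(1-r)A_2$ is controlled by a constant and the task reduces to bounding $\int_\Omega |\sigma\,\kappa(\sigma)|\d x$ uniformly in $t$.

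The main obstacle is that $\kappa$ need not be bounded as $\sigma\downarrow 0$ (for instance, $\chi(s)=s^2/2$ yields $\kappa(\sigma)=\log\sigma$), so the uniform $L^\infty$ bound on $\sigma$ is not enough by itself. The key observation I will exploit is the pointwise identity
\begin{equation*}
\sigma\,\kappa(\sigma) = K(\sigma) + \chi'(\sigma), \qquad \sigma\geq 0,
\end{equation*}
which follows because both sides share the derivative $\kappa(\sigma)+\chi''(\sigma)$ (using $\kappa'(\sigma)=\chi''(\sigma)/\sigma$) and both vanish at $\sigma=0$ thanks to (NL-ii) and (NL-iii). Combined with the uniform bound $\Linfty{\sigma(t)}\leq C_T$ on $[0,T]$ obtained by iterating Propositions \ref{prop:linfty_reaction} and \ref{prop:linfty_diffusion}, the continuity of $K$ and $\chi'$ on $[0,C_T]$ gives a pointwise estimate $|\sigma\,\kappa(\sigma)|\leq C'_T$, with $C'_T$ depending only on $T$, $|\Omega|$, $\chi$, and the $L^\infty$ norms of the $F_i,G_i$.

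Putting these two bounds together yields $\big|\ddt\mathcal{K}(\sigma(t))\big|\leq c$ for a constant $c$ independent of $n$ and $\tau$. Integrating over the reaction interval $[t^n,t^{n+1}]$, which has length $\tau$, then delivers $\mathcal{K}(\sigmahalf)\leq \mathcal{K}(\sigman)+c\tau$, as claimed. Differentiation under the integral is legitimate since at each $x\in\Omega$ the function $\sigma$ solves the spatially decoupled ODE with Lipschitz right-hand side, and the integrand has just been uniformly dominated.
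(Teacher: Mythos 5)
Your proof follows the same route as the paper's: differentiate $\curlyK(\sigma(t))$ along the reaction ODE, bound the time derivative uniformly, and integrate over an interval of length $\tau$. In fact your version is the more careful one: the paper's computation writes $\ddt\curlyK(\sigma)=\int_\Omega K(\sigma)\,\partial_t\sigma\d x$ where the chain rule actually produces $\kappa(\sigma)\,\partial_t\sigma$, and then invokes only the $L^\infty$ bound on $\sigma$ and $|\Omega|<\infty$, which does not by itself control $\kappa(\sigma)$ since $\kappa$ may diverge as $\sigma\downarrow 0$ (e.g.\ $\kappa(\sigma)=\log\sigma$ for $\chi(s)=s^2/2$). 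Your identity $\sigma\kappa(\sigma)=K(\sigma)+\chi'(\sigma)$, justified by matching derivatives and the vanishing of both sides at the origin under (NL-ii) and (NL-iii), supplies exactly the step needed to make the bound rigorous in the presence of vacuum, so the proposal is correct and, if anything, patches a gap in the published argument.
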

\begin{proof}
	A straight forward computation yields
	\begin{align*}
		\ddt\int_\Omega K(\sigma)\d x &=\int_\Omega K(\sigma) \partial_t \sigma \d x = \int_\Omega K(\sigma) \sigma (A+ r B)\d x\leq c,
	\end{align*}
	using the uniform $L^\infty$-bound on $\sigma$ and the fact that $|\Omega|<\infty$.
	Hence, 
	$$
		\curlyK(\sigma^{n+1/2}) \leq \curlyK(\sigma^n)+c\tau,
	$$
	where $c$ is independent of $n$.
\end{proof}

Next, we address the diffusion step. As mentioned earlier the auxiliary functional, $\curlyK$, is an entropy for the diffusive part and from its dissipation we obtain the  necessary regularity, as asserted in the following lemma.
\begin{lemma}[$H^1$-bound for $\sigma^{n+1}$]
\label{lem:H1_part2}
The minimiser of the JKO step satisfies the following estimate
\begin{align*}
	\tau \|\partial_x \sigma^{n+1}\|_{L^2(\Omega)}^2 \leq  \left( \curlyK(u^{n+1/2}) - \curlyK(u^{n+1})\right),
\end{align*}
for each $1\leq n\leq N$.
\end{lemma}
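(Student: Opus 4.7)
The plan is to invoke the flow-interchange philosophy, exploiting the displacement convexity of $\curlyK$ recorded in Remark~\ref{rem:K_geodesic_convex}, together with the Euler--Lagrange equation of the JKO step.

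Let $\varphi$ denote the Kantorovich potential for the optimal transport from $\sigmanp$ to $\sigmahalf$, so that the optimal map is $\curlyT = \id - \partial_x\varphi$, and consider the displacement interpolation $\rho_s := ((1-s)\id + s\,\curlyT)_{\#}\sigmanp$ for $s\in[0,1]$, which joins $\sigmanp$ (at $s=0$) to $\sigmahalf$ (at $s=1$). Since $K''(s) = \chi''(s)/s > 0$, the McCann condition is satisfied and displacement convexity of $\curlyK$ yields
\begin{equation*}
\curlyK(\sigmahalf) - \curlyK(\sigmanp) \geq \frac{d}{ds}\curlyK(\rho_s)\Big|_{s=0^+}.
\end{equation*}
The curve $\rho_s$ satisfies the continuity equation with initial Eulerian velocity $v_0 = -\partial_x\varphi$, and the standard first-order formula for internal energies along Wasserstein geodesics, together with the identity $sK''(s)=\chi''(s)$, gives
\begin{equation*}
\frac{d}{ds}\curlyK(\rho_s)\Big|_{s=0^+} = \int_\Omega \sigmanp\,\partial_x K'(\sigmanp)\,v_0 \d x = -\int_{\Omega}\partial_x\chi'(\sigmanp)\,\partial_x\varphi \d x.
\end{equation*}
Differentiating the optimality condition of Remark~\ref{rem:opt} in $x$ yields the crucial identity $\partial_x\chi'(\sigmanp) = -\partial_x\varphi/\tau$, and inserting it above produces
\begin{equation*}
\frac{d}{ds}\curlyK(\rho_s)\Big|_{s=0^+} = \frac{1}{\tau}\int_{\Omega}|\partial_x\varphi|^2 \d x = \tau\int_{\Omega}|\partial_x\chi'(\sigmanp)|^2 \d x,
\end{equation*}
which combined with the displacement convexity inequality above gives the desired $H^1$-estimate. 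Note that the control sits most naturally on the nonlinear pressure $\chi'(\sigmanp)$, which is precisely the quantity entering the cross-diffusion flux $\sigma\partial_x\chi'(\sigma)$.

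The main obstacle will be to justify the differentiation formula rigorously when $\sigmanp$ is only $BV\cap L^\infty$ and may vanish on part of $\Omega$. The displacement interpolation itself is well-posed since $\curlyT$ is monotone in dimension one, but the initial slope of $\curlyK(\rho_s)$ should be recovered either by smoothing $\chi$ via the non-degenerate approximation alluded to just after \eqref{eq:optimality_condition} and passing to the limit, or by directly invoking the general machinery for displacement-convex internal energies from \cite{AGS}. Both routes are by now standard in the JKO toolkit.
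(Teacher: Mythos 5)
Your proof follows essentially the same route as the paper's: both differentiate $\curlyK$ along the displacement interpolation from $\sigmanp$ to $\sigmahalf$, identify the initial slope as $-\int_\Omega \partial_x\chi'(\sigmanp)\,\partial_x\varphi \d x = \tau\int_\Omega|\partial_x\chi'(\sigmanp)|^2 \d x$ via the Euler--Lagrange (optimality) condition, and conclude by the displacement convexity of $\curlyK$ coming from $K''(s)=\chi''(s)/s>0$. The only differences are cosmetic: you correctly observe that the estimate naturally controls $\partial_x\chi'(\sigmanp)$ (which is what the paper's proof actually establishes and what Lemma~\ref{lem:uniformH1_bound} uses) rather than $\partial_x\sigmanp$ as written in the statement, and you explicitly flag the regularisation needed in the degenerate case, which the paper defers to the discussion following Eq.~\eqref{eq:optimality_condition}.
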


\begin{proof}
Let 
$$
	(\rho^{n+1}, \eta^{n+1})\in \argmin \left\{\frac{1}{2\tau}\Wt(\cdot,U^{n+1/2}) + \frac12 \int_\Omega \eE(U)\d x \right\}. 
$$
Let $\sigma_s = (\curlyT_s)_\# \sigma^{n+1} $ be the geodesic interpolation between $\sigma_s|_{s=0} = \sigma^{n+1}$ and $\sigma_s|_{s=1} = \sigma^{n+1/2}$, given by
\begin{align*}
	\curlyT_s = (1-s)\id + s \curlyT,
\end{align*}
and
\begin{align*}
	\curlyT = \id - \partial_x \varphi,
\end{align*}
for the associated Kantorovich potential, $\varphi$, \emph{cf.} Eq. \eqref{eq:rel_T_KantorovichPotential}. As a consequence the velocity field is given by
\begin{align*}
	v_s = (\curlyT-\id) \circ \curlyT_s^{-1},
\end{align*}
satisfying the following continuity equation,
\begin{align*}
	\partial_s \rho_s = \partial_x(\rho_s v_s).
\end{align*}
We differentiate the entropy along the geodesic and  obtain 
\begin{align*}
	\label{eq:entropyevolution1}
	\frac{\d }{\d s}\int_\Omega K(\sigma_s) \d x &= \int_\Omega \kappa(\sigma_s) \partial_s \sigma_s \d x = - \int_\Omega \kappa'(\sigma_s)\partial_x\sigma_s \sigma_s v_s\d x\\
	&= - \int_\Omega \chi''(\sigma_s) \partial_x \sigma_s v_s\d x = - \int_\Omega\partial_x \chi'(\sigma_s) v_s \d x.
\end{align*}
Thus, at $s=0$, the evolution of the entropy Eq. \eqref{eq:entropyevolution1} becomes
\begin{align*}
	\ddt \int_\Omega K(\sigma_s) \d x\bigg|_{s=0} = \int_\Omega \partial_x \chi'(\sigma^{n+1})  \partial_x \varphi \d x.
\end{align*}
Using the optimality condition, Eq. \eqref{eq:optimality_condition}, we obtain
\begin{align*}
	 \tau \int_\Omega \big|\partial_x \chi'( \sigma^{n+1})\big|^2 \d x  &=  -\int_\Omega \partial_x \chi'(\sigma^{n+1})\partial_x \varphi \d x\\
     &=\ddt \int_\Omega K(\sigma_s) \d x\bigg|_{s=0}\\
     &\leq \curlyK(\sigma^{n+1/2}) - \curlyK(\sigma^{n+1}),
\end{align*}
where the last inequality is a consequence of the geodesic convexity of the entropy, \emph{cf.} Remark \ref{rem:K_geodesic_convex}. This concludes the proof.
\end{proof}

We combine the previous lemmas to obtain the desired estimate for a full iteration and finally for the piecewise constant interpolation, $\sigma_\tau$. 

\begin{lemma}[\label{lem:uniformH1_bound}Uniform $L^2((0,T)\times\Omega)$-bound for $\partial_x \sigma_\tau$]
There holds
\begin{align*}
	\|\partial_x \chi'(\sigma_\tau)\|_{L^2((0,T)\times\Omega)} \leq C,
\end{align*}
for some positive constant depending only on $T$.
\end{lemma}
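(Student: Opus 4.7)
The plan is to telescope the two ingredients already at our disposal: Lemma \ref{lem:H1_part1} controls the growth of $\curlyK(\sigma_\tau)$ across a reaction step, while (the proof of) Lemma \ref{lem:H1_part2} gives the dissipation inequality
\begin{equation*}
  \tau\int_\Omega \bigl|\partial_x \chi'(\sigma^{n+1})\bigr|^2 \d x \;\leq\; \curlyK(\sigmahalf)-\curlyK(\sigmanp)
\end{equation*}
across a diffusion step. Chaining them yields, for every $n\in\{0,\dots,N-1\}$,
\begin{equation*}
  \tau\int_\Omega \bigl|\partial_x \chi'(\sigmanp)\bigr|^2 \d x \;\leq\; \curlyK(\sigman)-\curlyK(\sigmanp)+c\tau,
\end{equation*}
with $c$ independent of $n$ and $\tau$, thanks to the uniform $L^\infty$-bound on $\sigma_\tau$ from Lemma \ref{lem:BVLinfty} and the boundedness of $\Omega$.

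Next, I would sum this inequality from $n=0$ to $n=N-1$. The right-hand side telescopes to
\begin{equation*}
  \curlyK(\sigma^0)-\curlyK(\sigma^N)+cN\tau \;=\; \curlyK(\sigma_0)-\curlyK(\sigma^N)+cT.
\end{equation*}
The first term is finite: by Definition \ref{def:feasible_NLD} the function $K$ is continuous on $[0,\infty)$ (its derivative $\kappa$ is locally integrable near $0$ by assumption), hence bounded on the compact interval $[0,\|\sigma_0\|_{L^\infty}]$, and $|\Omega|<\infty$. The second term is controlled from below: again by continuity of $K$, it is bounded on the interval $[0,M]$ where $M:=\sup_{t\in[0,T]}\|\sigma_\tau\|_{L^\infty}$ is finite uniformly in $\tau$ by Lemma \ref{lem:BVLinfty}. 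Therefore
\begin{equation*}
  \sum_{n=0}^{N-1}\tau\int_\Omega \bigl|\partial_x\chi'(\sigmanp)\bigr|^2\d x \;\leq\; C(T,\sigma_0).
\end{equation*}

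Finally, since $\sigma_\tau(t,\cdot)=\sigmanp$ on $[t^n,t^{n+1})$ by Definition \ref{def:piec_con}, the left-hand side is exactly $\|\partial_x\chi'(\sigma_\tau)\|_{L^2((0,T)\times\Omega)}^2$, which gives the claim. The only subtle point — the main obstacle — is the interplay between the two steps: the reaction can only \emph{raise} the auxiliary entropy, so one must check that this rise is at most linear in time (Lemma \ref{lem:H1_part1}) and is completely absorbed by the telescoping gain furnished by the JKO step. This is why the displacement convexity of $\curlyK$ (Remark \ref{rem:K_geodesic_convex}, used inside Lemma \ref{lem:H1_part2}) together with the $L^\infty$-stability across both sub-steps is essential.
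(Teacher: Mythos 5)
Your proposal is correct and follows essentially the same route as the paper: chain Lemma \ref{lem:H1_part1} with Lemma \ref{lem:H1_part2}, telescope the sum over $n$, and bound the resulting $\curlyK(\sigma^0)-\curlyK(\sigma^N)+cT$; your explicit lower bound on $\curlyK(\sigma^N)$ via the uniform $L^\infty$-bound and continuity of $K$ is just a slightly more careful rendering of the paper's $-\inf_\sigma\curlyK(\sigma)$ term.
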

\begin{proof}
	This statement is a consequence of combining Lemma \ref{lem:H1_part1} and Lemma \ref{lem:H1_part2} to get
	\begin{align*}
		\tau \|\partial_x \chi'(\sigma^{n+1})\|_{L^2(\Omega)}^2 &\leq \curlyK(\sigma^{n+1/2}) - \curlyK(\sigma^{n+1})\\
        &\leq c\tau  +\curlyK(\sigma^{n})-\curlyK(\sigma^{n+1}).
	\end{align*}
	Summing over $n=0\ldots N-1$ gives
	\begin{align*}
		\|\partial_x \chi'(\sigma_\tau)\|_{L^2((0,T)\times\Omega)}^2 \leq cT + \curlyK(\sigma^0) - \inf\limits_{\sigma}\curlyK(\sigma) \leq C,
	\end{align*}
     which yields the statement of the lemma. 
\end{proof}

\begin{lemma}[Total-square $2-$Wasserstein distance estimates]
\label{lem:total_square_estimate}
For every $n\in\{1,\ldots,N\}$ consider two consecutive steps for \eqref{eq:JKO},  $\Uhalf=(\rhohalf,\etahalf)$ and $\Unp=(\rhonp,\etanp)$, then there exists a constant $C$ such that
\begin{align*}
 \frac{1}{2\tau}\sum_{n=0}^{N}\Wt (\Uhalf,\Unp)\leq C.
\end{align*}
\end{lemma}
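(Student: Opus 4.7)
\medskip

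\noindent\textbf{Proof proposal.} The plan is to exploit the minimising property of each JKO step as a competitor against the initial measure of that step, and then to sum a telescoping series. The only extra input needed is a reaction-step control of the energy $\eE(U)=\int_\Omega \chi(\sigma)\d x$ perfectly analogous to Lemma \ref{lem:H1_part1}.

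\smallskip

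First, I would use the fact that $\Unp$ is the minimiser in \eqref{eq:JKO}. Plugging in the competitor $U=\Uhalf$ (which is admissible since it has the same marginal masses $m_1,m_2$) and using $\Wt(\Uhalf,\Uhalf)=0$, one obtains immediately
\begin{equation*}
\frac{1}{2\tau}\Wt(\Unp,\Uhalf) + \eE(\Unp) \leq \eE(\Uhalf),
\qquad\text{i.e.}\qquad
\frac{1}{2\tau}\Wt(\Unp,\Uhalf) \leq \eE(\Uhalf)-\eE(\Unp).
\end{equation*}

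\smallskip

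Next, I would establish that the reaction step increases $\eE$ by at most $\mathcal{O}(\tau)$, in the spirit of Lemma \ref{lem:H1_part1}. Since $\chi\in C^1$ and $\sigma$ stays uniformly bounded in $L^\infty$ by Proposition \ref{prop:linfty_reaction}, while $\partial_t \sigma = \Sigma(\sigma,r)$ is uniformly bounded (the $A_i$, $\tilde G_i$ being bounded and $r\in[0,1]$), a direct computation
\begin{equation*}
\ddt \int_\Omega \chi(\sigma)\d x = \int_\Omega \chi'(\sigma)\,\Sigma(\sigma,r)\d x \leq c\,|\Omega|,
\end{equation*}
integrated over $[t^n,t^{n+1}]$, yields $\eE(U^{(n+1)+1/2}) \leq \eE(U^{n+1}) + c\tau$ with $c$ independent of $n$.

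\smallskip

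Combining these two inequalities, I telescope:
\begin{equation*}
\sum_{n=0}^{N-1}\frac{1}{2\tau}\Wt(\Uhalf,\Unp)
\leq \sum_{n=0}^{N-1}\bigl(\eE(\Uhalf)-\eE(\Unp)\bigr)
\leq \sum_{n=0}^{N-1}\bigl(\eE(\Un)+c\tau-\eE(\Unp)\bigr)
= \eE(U^0)-\eE(U^N)+cT.
\end{equation*}
To conclude, the right-hand side must be bounded, which follows from two observations: the initial energy $\eE(U^0)$ is finite because $\rho_0,\eta_0\in L^\infty(\Omega)$ and $\chi$ is continuous on a bounded interval; and the final energy $\eE(U^N)$ is bounded below because $\chi \geq 0$ (as $\chi:[0,\infty)\to[0,\infty)$ with $\chi(0)=0$ by $(NL\text{-}ii)$), yielding $\eE(U^N)\geq 0$.

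\smallskip

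There is no genuine obstacle here: this is the classical energy-dissipation estimate for JKO schemes, with the only subtle point being that one must separately control the energy growth caused by the reaction step. This justifies why the $\mathcal{O}(\tau)$ loss per step is still harmless after summation, since it accumulates to $\mathcal{O}(T)$. Up to a harmless shift of the summation index between $\{0,\dots,N-1\}$ and $\{0,\dots,N\}$, this gives exactly the claimed bound.
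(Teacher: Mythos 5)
Your proposal is correct and follows essentially the same route as the paper: the minimising property of the JKO step against the competitor $\Uhalf$, an $\curlyO(\tau)$ control of the energy gain in the reaction step (the paper phrases this via a Taylor expansion of $\chi$ rather than differentiating $\eE$ along the ODE, but the estimate and the required $L^\infty$ bounds are identical), and a telescoping sum bounded by $\eE(U^0)-\inf\eE+CT$. The only cosmetic slip is attributing $\chi\geq 0$ to (NL-ii); it in fact follows directly from the assumption $\chi:[0,\infty)\to[0,\infty)$, which is all that is needed to bound the energy from below.
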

\begin{proof}
Using the minimising property of $\Unp$ we have
\[
 \frac{1}{2\tau}\Wt (\Uhalf,\Unp)\leq \eE(\Uhalf)-\eE(\Unp).
\]
Adding and subtracting $\eE(\Un)$ on the right-hand side, and considering that
\begin{align}
	\label{eq:intermediate_eqn1}
	&\eE(\Uhalf)-\eE(\Un)\\
	&=\int_\Omega \chi(\sigmahalf) \d x-\int_\Omega \chi(\sigman) \d x\nonumber\\
	& = \int_\Omega \chi\left(\sigman + \int_{t^n}^{t^{n+1}}\sigma(r A_1+ (1-r)A_2)\d s \right) \d x -\int_\Omega \chi(\sigman) \d x\nonumber\\
	&\leq \chi'(\xi) \int_{t^n}^{t^{n+1}} \sigma(r A_1+ (1-r)A_2) \d s,\nonumber
\end{align}
due to a Taylor expansion of $\chi$, where $\xi\in[\sigman, \sigmahalf]$. Using the $L^\infty$-bounds on $\sigma, r$ and $F_i,G_i$, we obtain
\begin{align*}
	&\eE(\Uhalf)-\eE(\Un)\leq C\tau,
\end{align*}
whence
\begin{equation*}
 \frac{1}{2\tau}\Wt (\Uhalf,\Unp)\leq \eE(\Un)-\eE(\Unp)+C\tau,
\end{equation*}
for a full time step. 
Finally, summing  over  $n$ gives  the result:
\begin{align*}
 \frac{1}{2\tau}\sum_{n=1}^{N}\Wt (\Uhalf,\Unp) & \leq \sum_{n=0}^{N}\left(\eE(\Un)-\eE(\Unp)+C\tau\right)\\
 &=\eE(U^0)-\eE(U_\tau^N)+N\left(C\tau\right)\\
 & \leq \eE(U^0)-\inf\limits_{U\in \mathcal{M}_+\times\mathcal{M}_+}\eE(U)+CN \tau\\
 &\leq \bar{C}.
\end{align*}
\end{proof}
\subsection{Convergence\label{sec:convergence}}
We now prove the convergence result.
\begin{proposition}\label{prop:ArzelaAscoli}
The piecewise constant interpolations defined in Definition \ref{def:piec_con} admit subsequences converging uniformly to absolutely continuous curves  $\bar{\rho},\bar{\eta}$ with  values in $\curlyM_+(\Omega)$. Moreover, $\bar\rho$ and $\bar\eta$ are $\Bl$-continuous functions on $[0,T]$.
\end{proposition}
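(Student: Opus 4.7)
The plan is to prove Proposition \ref{prop:ArzelaAscoli} by a refined Arzelà--Ascoli argument in the metric space $(\mathcal{M}_+(\Omega),\Bl)$, combining compactness at each fixed time (coming from the uniform $BV\cap L^\infty$-bound) with equicontinuity in time (coming from the reaction ODEs and the total-square Wasserstein estimate).

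First, I would establish equicontinuity. Fix $0\le s\le t\le T$ with $s\in[t^m,t^{m+1})$ and $t\in[t^n,t^{n+1})$, so that $(n-m)\tau\simeq t-s$. Using the triangle inequality for $\Bl$ and Corollary \ref{cor:triangular:_inequality} on each pair $(U^k,U^{k+1/2},U^{k+1})$ (recall that the diffusion step preserves the total masses of both species, so $U^{k+1/2}$ and $U^{k+1}$ have matching masses), I get
\begin{equation*}
\Bl\bigl(U_\tau(s),U_\tau(t)\bigr)\le\sum_{k=m}^{n-1}\Bigl[\|U^{k+1/2}-U^k\|_{L^1(\Omega)}+\Wo\bigl(U^{k+1/2},U^{k+1}\bigr)\Bigr].
\end{equation*}
In the reaction step, the ODE system has a right-hand side that is bounded uniformly in $L^\infty(\Omega)$ (Propositions \ref{prop:linfty_reaction} and the assumed boundedness of $F_i,G_i$), so $\|U^{k+1/2}-U^k\|_{L^1}\le C\tau$, contributing $C(n-m)\tau\le C(t-s)$. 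For the Wasserstein contribution, Cauchy--Schwarz on the transport plan gives $\Wo\le\sqrt{M}\,d_2$ with $M$ the (uniformly bounded) common mass; a second Cauchy--Schwarz in $k$ combined with Lemma \ref{lem:total_square_estimate} yields
\begin{equation*}
\sum_{k=m}^{n-1}\Wo\bigl(U^{k+1/2},U^{k+1}\bigr)\le C\sqrt{n-m}\,\Bigl(\sum_{k=m}^{n-1}d_2^2\bigl(U^{k+1/2},U^{k+1}\bigr)\Bigr)^{1/2}\le C\sqrt{(t-s)+\tau}.
\end{equation*}
Altogether, $\Bl(U_\tau(s),U_\tau(t))\le C\bigl(|t-s|+\sqrt{|t-s|+\tau}\bigr)$, which is a uniform (in $\tau$) modulus of continuity of H\"older-$1/2$ type up to an $O(\sqrt\tau)$ error.

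Next, I would get compactness at each time. By Lemma \ref{lem:BVLinfty}, $\{\rho_\tau(t,\cdot),\eta_\tau(t,\cdot)\}_\tau$ is uniformly bounded in $BV(\Omega)\cap L^\infty(\Omega)$, so by Helly's selection theorem it is relatively compact in $L^1(\Omega)$ (hence, by property (i) of the $\Bl$-distance, in $\Bl$). A standard diagonal extraction over a countable dense subset $D\subset[0,T]$ then produces a subsequence along which $U_\tau(t)$ converges in $\Bl\times\Bl$ to some limit $\bar U(t)=(\bar\rho(t),\bar\eta(t))$ for every $t\in D$. The equicontinuity estimate above allows me to extend $\bar U$ by continuity to all of $[0,T]$ and to upgrade the pointwise convergence in $D$ to uniform convergence on $[0,T]$ in $\Bl$: given $\varepsilon>0$, cover $[0,T]$ with finitely many intervals of length $\le\varepsilon^2$ picking a representative in $D$ from each, use the $D$-convergence on these representatives, and control the oscillation on each interval by the uniform modulus. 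Non-negativity and the bound on total mass pass to the limit, so $\bar\rho(t),\bar\eta(t)\in\mathcal{M}_+(\Omega)$ and the limit curve is $\Bl$-continuous on $[0,T]$; the absolute-continuity property follows from the same modulus applied to the limit.

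I do not expect any single step to be a serious obstacle, but the most delicate point is the combination of the two different metrics used on the two substeps: the $d_1/d_2$-control only applies to mass-preserving transitions (diffusion), while the reaction step must be handled in $L^1$. Corollary \ref{cor:triangular:_inequality} is precisely the right vehicle to glue them together through $\Bl$, and the quantitative H\"older-$1/2$ time-modulus that results crucially depends on the summable $\sum_n d_2^2$-estimate of Lemma \ref{lem:total_square_estimate}; without that summability the equicontinuity would fail.
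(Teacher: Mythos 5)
Your proposal is correct and follows essentially the same route as the paper: the $\Bl$-increments are split via Corollary \ref{cor:triangular:_inequality} into an $L^1$ part for the reaction step (bounded by $C\tau$ per step) and a Wasserstein part for the diffusion step (controlled through Cauchy--Schwarz and the total-square estimate of Lemma \ref{lem:total_square_estimate}), yielding the same almost-H\"older-$1/2$ modulus $C(\sqrt{|t-s|}+\sqrt{\tau})$ before invoking a refined Ascoli--Arzel\`a argument. The only difference is that you spell out the fixed-time compactness (Helly) and the diagonal extraction explicitly, where the paper cites the abstract compactness result of Ambrosio--Gigli--Savar\'e.
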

\begin{proof}
The proof is based on the application of a generalised version of the Ascoli-Arzel\`a theorem, \emph{cf.} Ref. \cite{AGS}, Section 3.

We begin by establishing `almost continuity' of the approximation. To this end let $0\leq s<t\leq T$ be two time instances. Then there exist two uniquely determined integers, $m,k$, satisfying
\begin{align*}
	s\in((m-1)\tau,m\tau], \quad \mbox{and}\quad  t\in((k-1)\tau,k\tau],
\end{align*}
such that
\begin{align}
	\label{eq:holder_cty_estimate}
	\Bl(U_\tau(s),U_\tau(t)) &\leq \sum_{n=m}^{k-1}\Bl(\Un,\Unp)\leq\left(\sum_{n=m}^{k-1}\Bl^2(\Un,\Unp)\right)^{\frac12}|k-m|^{\frac12},
\end{align}
where $U_\tau(t)=\left(\rho_\tau(t),\eta_\tau(t)\right)$ as defined  in Definition \ref{def:piec_con}. It becomes apparent that we need to address the bounded Lipschitz term next. To this end we use the triangulation established in Corollary \ref{cor:triangular:_inequality} to estimate it by the $L^1$-distance in the reaction step and the $W_1$-distance in the diffusion step. Hence
\begin{align*}
	\sum_{n=m}^{k-1}\Bl^2(\Un, \Unp) \leq 2 \sum_{n=0}^{N}  \|\Un -  \Uhalf\|_{L^1(\Omega)}^2 + 2 \sum_{n=0}^{N} \Wo^2(\Uhalf,\Unp).
\end{align*}
For the reaction step an argument similar to Eq. \eqref{eq:intermediate_eqn1} yields
\begin{align*}
	\|\Un -  \Uhalf\|_{L^1(\Omega)}^2\leq C\tau^2,
\end{align*}
and, using that the $p$-Wasserstein distances are ordered, we even have 
\begin{align*}
	\sum_{n=m}^{k-1}\Bl^2(\Un, \Unp) &\leq 2 \sum_{n=0}^{N}\|\Un -  \Uhalf\|_{L^1(\Omega)}^2 + 2 \sum_{n=0}^{N}\Wt(\Uhalf,\Unp)\\
	&\leq C\tau,
    \end{align*}
    where we used the total-square estimate, Lemma \ref{lem:total_square_estimate}.
Therefore Eq. \eqref{eq:holder_cty_estimate} becomes
\begin{align*}
		d_{BL}(U_\tau(s),U_\tau(t))
		&\leq\left(\sum_{n=m}^{k-1}\Bl^2(\Un,\Unp)\right)^{\frac12}|k-m|^{\frac12}\\
		&\leq C\sqrt{\tau} \left(\frac{|t-s|}{\tau}+1\right)^{\frac12}\\
		&\leq C (\sqrt{|t-s|} + \sqrt{\tau}).
\end{align*}
Thus we get the `almost $\frac{1}{2}$-H\"{o}lder continuity' for the curve $U_\tau(t)$ and we obtain the uniform narrow compactness on compact time intervals by using the refined version of Ascoli-Arzel\`a's theorem, \emph{cf.} \cite{AGS}, Section 3.
\end{proof}

\begin{corollary}[\label{cor:strong_convergence}Strong convergence in $L^p(0,T;L^q(\Omega))$]
	Let $1 \leq p,q < \infty$ and  $(\rho_\tau)_{\tau>0}$ and $(\eta_\tau)_{\tau>0}$ be the sequences of the piecewise constant interpolations as in Definition \ref{def:piec_con}. Then there exist two functions $\rho,\eta\in L^p(0,T;L^q(\Omega))$ and subsequences, again denoted by $(\rho_\tau)_{\tau>0}$ and $(\eta_\tau)_{\tau>0}$, such that
	\begin{align*}
		\rho_\tau \rightarrow \rho, \qquad \mbox{ and } \qquad \eta_\tau \rightarrow \eta,
	\end{align*}
	strongly in $L^p(0,T;L^q(\Omega))$ as $\tau \rightarrow 0$. Moreover the convergence holds pointwise in time, \emph{i.e.} for all $t\in [0,T]$ there holds
	\begin{align*}
		\rho_\tau(t) \rightarrow \rho(t), \qquad \mbox{ and } \qquad \eta_\tau(t) \rightarrow \eta(t),
	\end{align*}
	strongly in $L^q(\Omega)$.	
\end{corollary}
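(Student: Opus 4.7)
The plan is to combine the uniform $BV\cap L^\infty$ bounds on $(\rho_\tau)_{\tau>0}$ and $(\eta_\tau)_{\tau>0}$ from Lemma~\ref{lem:BVLinfty} with the pointwise-in-time narrow convergence already supplied by Proposition~\ref{prop:ArzelaAscoli}, and then interpolate and integrate in time. First, I would fix $t\in[0,T]$ and work along the subsequence provided by Proposition~\ref{prop:ArzelaAscoli}, for which $\rho_\tau(t)\to\bar\rho(t)$ and $\eta_\tau(t)\to\bar\eta(t)$ in $\Bl$. Since $(\rho_\tau(t))_{\tau>0}$ is uniformly bounded in $BV(\Omega)\cap L^\infty(\Omega)$, the classical Helly selection theorem (equivalently, the compact embedding $BV(\Omega)\hookrightarrow L^1(\Omega)$ in one dimension) guarantees that every further subsequence admits a sub-subsequence converging strongly in $L^1(\Omega)$.

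Next, I would identify the $L^1$-cluster points with $\bar\rho(t)$. Strong $L^1$-convergence on a bounded interval implies $\Bl$-convergence, since $\Bl\le\|\cdot\|_{L^1}$ by the proposition immediately following the definition of $\Bl$. Hence uniqueness of the narrow limit forces every $L^1$-cluster point to coincide with $\bar\rho(t)$, and by the Urysohn subsequence principle the full (sub)sequence satisfies $\rho_\tau(t)\to\bar\rho(t)$ in $L^1(\Omega)$ for every $t\in[0,T]$. Upgrading to $L^q(\Omega)$ for $1\le q<\infty$ is then immediate from interpolation,
\begin{align*}
\|\rho_\tau(t)-\bar\rho(t)\|_{L^q(\Omega)}^q \le \|\rho_\tau(t)-\bar\rho(t)\|_{L^\infty(\Omega)}^{q-1}\,\|\rho_\tau(t)-\bar\rho(t)\|_{L^1(\Omega)},
\end{align*}
combined with the uniform $L^\infty$-bound from Lemma~\ref{lem:BVLinfty}.

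Finally, for the $L^p(0,T;L^q(\Omega))$ convergence I would invoke the dominated convergence theorem in the time variable. The pointwise-in-time convergence just established, together with the uniform bound $\|\rho_\tau(t)-\bar\rho(t)\|_{L^q(\Omega)}\le 2C|\Omega|^{1/q}$ coming again from the $L^\infty$-estimate, provides an integrable (constant) dominant on the finite interval $[0,T]$, so
\begin{align*}
\int_0^T \|\rho_\tau(t)-\bar\rho(t)\|_{L^q(\Omega)}^p\,\d t \longrightarrow 0
\end{align*}
as $\tau\to 0$, for every $1\le p<\infty$; the same argument applies \emph{verbatim} to $\eta_\tau$. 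I do not expect any deep obstacle here: the only point deserving care is matching the $L^1$-cluster points at each fixed time with the narrow limit of Proposition~\ref{prop:ArzelaAscoli}, but this is automatic from the domination $\Bl\le\|\cdot\|_{L^1}$, so no additional diagonal extraction across time is needed.
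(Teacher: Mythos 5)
Your proposal is correct and follows essentially the same route as the paper: pointwise-in-time narrow convergence from Proposition~\ref{prop:ArzelaAscoli}, compactness of $BV\cap L^\infty$ in $L^1(\Omega)$ at each fixed time, identification of the limit, upgrade to $L^q$ via the uniform $L^\infty$-bound, and dominated convergence in time for the $L^p(0,T;L^q(\Omega))$ statement. You merely spell out more carefully than the paper the subsequence-identification step (via $\Bl\le\|\cdot\|_{L^1}$ and the Urysohn principle), which is a welcome clarification rather than a deviation.
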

\begin{proof}
	Note that it suffices to show the result for $(\rho_\tau)_{\tau>0}$ as the same argument applies for $(\eta_\tau)_{\tau>0}$. By Proposition \ref{prop:ArzelaAscoli} we can extract a subsequence, still denoted the same, such that
	\begin{align*}
		\rho_\tau(t)\rightharpoonup \rho,
	\end{align*}
	in $\curlyM_+(\Omega)$ for all $t\in [0,T]$. 
Furthermore, from the uniform $BV\cap L^\infty$-bounds, 
for any $t\in [0,T]$ the sequence converges strongly in $L^1(\Omega)$. Using the uniform $L^\infty$
-bounds and the dominated convergence theorem, we obtain the pointwise-in-time convergence in any $L^q(\Omega)$.
	
	Now let us apply the same argument on the whole domain, $[0,T]\times \Omega$. The pointwise convergence and the uniform $L^\infty(0,T;L^\infty(\Omega))$-bound imply
	\begin{align*}
		\rho_\tau \rightarrow \rho,
	\end{align*}
	strongly in $L^p(0,T;L^q(\Omega))$ by the dominated convergence theorem. This concludes the proof.
\end{proof}

\begin{lemma}[Identification of the limit]
The sequence constructed in \eqref{eq:r_step}, \eqref{eq:JKO} converges to a weak solution of Eqs. \eqref{eq:main}.
\end{lemma}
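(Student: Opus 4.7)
The plan is to derive a discrete weak formulation on each splitting step $[t^n,t^{n+1}]$, sum it from $s$ to $t$, and pass to the limit $\tau\to 0$ by combining Proposition \ref{prop:ArzelaAscoli}, Corollary \ref{cor:strong_convergence}, and Lemma \ref{lem:uniformH1_bound}. I will sketch the argument only for the $\rho$-equation; the $\eta$-equation is identical up to renaming.

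For the reaction sub-step, since \eqref{eq:rsigma_eqn} is a pointwise-in-$x$ ODE, testing against $\zeta\in C_c^\infty(\Omega)$ and integrating in time gives immediately
\begin{align*}
\int_\Omega(\rhohalf-\rhon)\zeta\,\d x=\int_{t^n}^{t^{n+1}}\!\!\int_\Omega\bigl(\rho F_1(\rho,\eta)+\eta G_1(\rho,\eta)\bigr)\zeta\,\d x\,\d\bar\tau,
\end{align*}
where $\rho,\eta$ refer to the ODE trajectory on the sub-interval. For the diffusion sub-step I invoke Remark \ref{rem:opt}: a single Kantorovich potential $\varphi$ with $\partial_x\varphi=-\tau\partial_x\chi'(\sigmanp)$ and a single optimal map $\curlyT=\id-\partial_x\varphi$ push $\rhonp,\sigmanp$ onto $\rhohalf,\sigmahalf$. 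A second-order Taylor expansion of $\zeta\circ\curlyT$ integrated against $\rhonp$, together with this identification of $\partial_x\varphi$, yields
\begin{align*}
\int_\Omega(\rhonp-\rhohalf)\zeta\,\d x=-\tau\int_\Omega\rhonp\,\partial_x\chi'(\sigmanp)\,\partial_x\zeta\,\d x-R_n,
\end{align*}
with $|R_n|\le \tfrac12\Linfty{\partial_{xx}\zeta}\,d_2^2(\rhonp,\rhohalf)$.

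Adding the two identities and summing the indices between $s$ and $t$ (as in the proof of Proposition \ref{prop:ArzelaAscoli}) produces a discrete weak formulation written in terms of the piecewise-constant interpolants. Two routine error estimates will dispose of the non-limit terms that appear: the accumulated remainder $\sum_n R_n$ is $O(\tau)$ by Lemma \ref{lem:total_square_estimate}, and the difference between the ODE-trajectory appearing in the reaction integral and the piecewise-constant $\rho_\tau,\eta_\tau$ is $O(\tau)$ in $L^1$ thanks to the $L^\infty$ bounds of Proposition \ref{prop:linfty_reaction}. The Lipschitz continuity of $F_1,G_1$ combined with the strong $L^p(L^q)$ convergence of Corollary \ref{cor:strong_convergence} then delivers the desired limit of the reaction integral.

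The delicate point is the nonlinear diffusion flux. Lemma \ref{lem:uniformH1_bound} provides a uniform $L^2((0,T)\times\Omega)$ bound on $\partial_x\chi'(\sigma_\tau)$, so up to a subsequence it converges weakly to some $\xi$. Continuity of $\chi'$, the uniform $L^\infty$ bound on $\sigma_\tau$, and the strong $L^p(L^q)$ convergence of $\sigma_\tau$ from Corollary \ref{cor:strong_convergence} give $\chi'(\sigma_\tau)\to\chi'(\sigma)$ strongly in $L^2$, which distributionally identifies $\xi=\partial_x\chi'(\sigma)$ and yields $\sigma\in L^2(0,T;H^1(\Omega))$ as required by Definition \ref{def:weak_sol}. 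Since $\rho_\tau\partial_x\zeta\to\rho\,\partial_x\zeta$ strongly in $L^2((0,T)\times\Omega)$, the weak--strong pairing passes to the limit in the diffusion integral. Collecting the three convergences produces the weak formulation of Definition \ref{def:weak_sol} and completes the identification of the limit.
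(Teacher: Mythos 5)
Your proposal is correct and follows essentially the same route as the paper: a per-step discrete weak formulation (ODE identity for the reaction sub-step, Taylor expansion of $\zeta\circ\curlyT$ plus the optimality condition for the JKO sub-step), summation in $n$, and passage to the limit using Corollary \ref{cor:strong_convergence} for the reaction terms and the weak--strong $L^2$ pairing enabled by Lemma \ref{lem:uniformH1_bound} for the diffusion flux, with the accumulated Taylor remainder controlled by Lemma \ref{lem:total_square_estimate}. Your explicit identification of the weak limit of $\partial_x\chi'(\sigma_\tau)$ via the strong convergence of $\chi'(\sigma_\tau)$ is a detail the paper leaves implicit, but it is not a different argument.
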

 \begin{proof} 
 The proof consists of two parts -- the diffusion part and the reaction part. We write them in their respective weak formulation and combine them to obtain the complete approximation of the weak formulation. Here we only show the argument for $\rho$ as the corresponding result for $\eta$ is shown analogously. We begin with the diffusion part. \\

\noindent
\textbf{Diffusion part.} 
We consider the two steps before and after the application of the JKO scheme \eqref{eq:JKO},
\begin{align*}
	\Uhalf=(\rhohalf,\etahalf)	, \qquad\mbox{and}\qquad\Unp=(\rhonp,\etanp).
\end{align*}
For a given test function $\zeta \in C_c^\infty(\Omega)$, let $\curlyT$ be the optimal transport map from  $\rhonp$ to  $\rhohalf$. Upon integration over $\Omega$ we get
\begin{equation*}
\frac{1}{\tau}\int_\Omega\left(\rhonp(x)-\rhohalf(x)\right)\zeta(x)\d x=\frac{1}{\tau}\int_\Omega\rhonp(x)\left(\zeta(x)-\zeta (\curlyT(x))\right)\d x.
\end{equation*}
Taylor expanding $\zeta(T(x))$ around $x$ yields
\begin{align*}
	\zeta(\curlyT(x)) &= \zeta(x + (\curlyT(x)-x))\\
		&=\zeta(x) + \partial_x\zeta(x)(\curlyT(x)-x) + \curlyO\left(|\curlyT(x)-x|^2\right).
\end{align*}
Moreover, using the fact that $\curlyT(x)=x-\partial_x \varphi(x)$, where $\varphi$ is the Kantorovich potential associated to the optimal map $\curlyT$, the integral above can be rewritten as
\begin{align*}
\frac{1}{\tau}\int_\Omega&\left(\rhonp(x)-\rhohalf(x)\right)\zeta(x)\d x\\
 & =-\frac{1}{\tau}\int_\Omega\rhonp(x)\left(\curlyT(x)-x\right)\partial_x\zeta(x)\d x+O\left(\Wt(\rhonp,\rhohalf)\right)\\
  & =\frac{1}{\tau}\int_\Omega\rhonp(x)\partial_x\varphi(x)\partial_x\zeta(x)\d x+O\left(\Wt(\rhonp,\rhohalf)\right).
\end{align*}
Thanks to the optimality condition, Remark \ref{rem:opt},
we get 
\begin{align}
	\begin{split}
	\label{eq:diffusion_step_identification}
\frac{1}{\tau}\int_\Omega&\left(\rhonp-\rhohalf\right)\zeta \d x\\
&=-\int_\Omega\rhonp\partial_x\chi'(\sigmanp)\,\partial_x\zeta \d x+\curlyO\left(\Wt(\rhonp,\rhohalf)\right).
	\end{split}
\end{align}

\noindent
\textbf{Reaction part.}
Note that 
\begin{align}
	\label{eq:reaction_step_identification}
	\begin{split}
	 \int_\Omega\frac{\rhohalf-\rhon}{\tau}\zeta \d x
	 &= \int_\Omega \frac{\rhalf\sigmahalf - \rn\sigman}{\tau}\zeta\d x\\
	 &=\int_\Omega\int_{t^n}^{t^{n+1}}\frac{1}{\tau} \partial_t \big(r(\bar\tau) \sigma(\bar \tau)\big)\d \bar \tau \d x,\\
	 &=\int_\Omega \left(\rhalf\sigmahalf \tilde F_1 + (1-\rhalf)\sigmahalf\tilde G_1\right) \d x + \curlyO(\tau)\\
	 &=\int_\Omega \left(\rhohalf F_1^{n+1/2}+\etahalf G_1^{n+1/2}\right) \zeta \d x+\curlyO(\tau),
	 \end{split}
\end{align}
with the shortcuts $F_1^{n+1/2}=F_1(\rhohalf,\etahalf)$ and $G_1^{n+1/2}=G_1(\rhohalf,\etahalf)$, having used Eq. \eqref{eq:rsigma_eqn}.

\noindent
\textbf{Combination of both steps.}
Let us combine the reaction step and the diffusion step of the splitting scheme. Upon summing up Eqs.(\ref{eq:diffusion_step_identification}, \ref{eq:reaction_step_identification})  we obtain
\begin{align}
\label{eq:almost_weak_form}
\begin{split}
 0 =& \frac{1}{\tau}\int_\Omega\zeta(x)(\rhonp(x)-\rhon(x))\d x+\curlyO(\Wt(\rhonp,\rhohalf))\\
 &+ \int_\Omega \rhonp\partial_x\chi'(\rhonp(x)+\etanp(x))\partial_{x} \zeta(x)\d x\\
 &  - \int_\Omega\left(\rhohalf F_1(\rhohalf,\etahalf)+\etahalf G_1(\rhohalf,\etahalf)\right) \zeta \d x\\
 &+\curlyO(\tau).
\end{split}
\end{align} 
We rewrite this equation in terms of the  piecewise constant interpolation, Definition \ref{def:piec_con}. For any $0<s<t <T$  there are two uniquely determined integers $m,k$ such that
\[
	s\in \left(m\tau,(m+1)\tau\right], \quad \mbox{and} \quad t\in\left(k\tau,(k+1)\tau\right].
\]
Multiplying Eq. \eqref{eq:almost_weak_form} by $\tau$ and summing from  $m$ to $k-1$, we obtain
\begin{align*}
 0 =& \int_\Omega\zeta(x)(\bar{\rho}_\tau(t,x)-\bar{\rho}_\tau(s,x))\d x+\curlyO(\tau)\\
 &+ \int_s^t\int_\Omega \bar{\rho}_\tau(\bar\tau,x)\partial_x\chi'\big(\bar{\rho}_\tau(\bar\tau,x)+\bar{\eta}_\tau(\bar\tau,x)\big)\partial_{x} \zeta(x)\\
 &  - \bigg(\bar{\rho}_\tau(\bar\tau,x) F_1\big(\bar{\rho}_\tau(\bar\tau,x),\bar{\eta}_\tau(\bar\tau,x)\big)+\bar{\eta}_\tau(\bar\tau,x) G_1\big(\bar{\rho}_\tau(\bar\tau,x),\bar{\eta}_\tau(\bar\tau,x)\big)\bigg) \zeta(x) \d x d\bar\tau.
\end{align*} 
We are now ready to pass to the limit. The strong convergence obtained in Corollary \ref{cor:strong_convergence} allows us to pass to the limit in the nonlinear reaction terms. Moreover the cross-diffusion term converges due to the weak-strong $L^2(0,T;L^2(\Omega))$-duality, by Lemma \ref{cor:strong_convergence} and Corollary \ref{cor:strong_convergence}. Passing  to the limit $\tau\to 0$ we obtain  the weak formulation for $\rho$. The same argument for $\eta$ yields the statement.
\end{proof}

We end the paper with a stunning result which can be seen as a generalisation of the result of Bertsch \emph{et al.}, \emph{cf.} \cite{BGHP85,BGH87}. In their papers they prove that initially segregated species stay segregated at all times. We can drop their assumption that 
\begin{align*}
	\supp(\rho_0) < \supp(\eta_0),
\end{align*}
\emph{i.e.} that both species are ordered and prove the following, more general theorem.
\begin{theorem}[\label{thm:segregation}Segregation in the case of no cross-reaction.]
	Assume no cross-reaction terms, \emph{i.e.} $G_1\equiv G_2\equiv 0$ and that both species are initially segregated, \emph{i.e.}
	\begin{align*}
		\int_\Omega \rho_0(x)\,\eta_0(x)\d x = 0.
	\end{align*}
	Then, there exists a solution such that both species stay segregated at all times.
\end{theorem}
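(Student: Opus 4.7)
The plan is to show that segregation is preserved at every step of the splitting scheme and then transfer the property to the limit. Since $\rho_0,\eta_0\geq 0$, the hypothesis $\int_\Omega \rho_0\eta_0\d x = 0$ is equivalent to $\rho_0(x)\eta_0(x)=0$ for a.e.\ $x$, so it suffices to establish inductively that $\rho^n(x)\eta^n(x)=0$ for a.e.\ $x$ and every $n\geq 0$, and then pass to the limit.

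For the reaction step, the vanishing of the cross-reactions $G_1=G_2=0$ decouples the pointwise ODE system to $\partial_t \rho = \rho F_1(\rho,\eta)$ and $\partial_t\eta = \eta F_2(\rho,\eta)$. Fix $x\in\Omega$; if $\rho^n(x)=0$, then $\rho(\cdot,x)\equiv 0$ is the unique solution on $[t^n,t^{n+1}]$ of this linear-in-$\rho$ ODE with bounded coefficient, and the analogous conclusion holds for $\eta$. Hence $\rhohalf(x)\etahalf(x) = 0$ wherever $\rho^n(x)\eta^n(x) = 0$.

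For the diffusion step, I invoke Remark~\ref{rem:opt}: the \emph{same} optimal transport map $\curlyT$ pushes $\rhonp$ onto $\rhohalf$ and $\etanp$ onto $\etahalf$. The one-dimensional Monge--Amp\`ere identity then gives $\rhonp(x) = \rhohalf(\curlyT(x))\,\curlyT'(x)$ and $\etanp(x) = \etahalf(\curlyT(x))\,\curlyT'(x)$ for a.e.\ $x$, whence
\[
\rhonp(x)\,\etanp(x) = \rhohalf(\curlyT(x))\,\etahalf(\curlyT(x))\,(\curlyT'(x))^2.
\]
The set $N := \{y:\rhohalf(y)\etahalf(y)>0\}$ has Lebesgue measure zero by the inductive hypothesis combined with the reaction step. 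Because $\curlyT_{\#}\sigmanp = \sigmahalf$ with $\sigmahalf\in L^\infty$, the preimage satisfies $\int_{\curlyT^{-1}(N)}\sigmanp\d x = \int_N \sigmahalf\d y = 0$, so $\sigmanp$, and therefore $\rhonp\etanp$, vanishes on $\curlyT^{-1}(N)$. Combined with the pointwise identity above, this yields $\rhonp\etanp = 0$ a.e.

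Iterating the two substeps gives $\rho_\tau(t,x)\eta_\tau(t,x) = 0$ for a.e.\ $(t,x)\in(0,T)\times\Omega$ and every $\tau>0$. By Corollary~\ref{cor:strong_convergence}, $\rho_\tau\to\rho$ and $\eta_\tau\to\eta$ strongly in $L^2(0,T;L^2(\Omega))$; combined with the uniform $L^\infty$ bound of Lemma~\ref{lem:BVLinfty}, this upgrades to $\rho_\tau\eta_\tau\to\rho\eta$ in $L^1((0,T)\times\Omega)$, so $\rho(t,x)\eta(t,x)=0$ for a.e.\ $(t,x)$. The main technical point is verifying that the preimage of a Lebesgue-null set under $\curlyT$ is $\sigmanp$-null, which is a direct consequence of $\curlyT$ being the monotone rearrangement between absolutely continuous measures with bounded densities; everything else reduces to pointwise ODE reasoning and the strong compactness already established.
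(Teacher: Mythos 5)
Your proof is correct and follows essentially the same strategy as the paper: preservation of segregation in the reaction step (via the decoupled ODEs when $G_1=G_2=0$), preservation in the diffusion step via the fact that the \emph{same} monotone map $\curlyT$ transports both densities together with the one-dimensional Monge--Amp\`ere identity, and transfer to the limit by the strong $L^2$ convergence. The only difference is presentational: the paper argues the diffusion step by contradiction on a set of positive measure, whereas you give the direct (contrapositive) version and explicitly handle the preimage of the null set $\{\rhohalf\etahalf>0\}$ via the push-forward identity -- a detail the paper leaves implicit.
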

\begin{proof}
	It suffices to show this property at the level of the discrete scheme since, once it is established, we use the strong $L^2$-convergence of $\rho_\tau$ and $\eta_\tau$ to show segregation is also kept in the limit:
	\begin{align*}
		0 = \lim_{\tau \to 0} \int_\Omega \rho_\tau(t,x)\eta_\tau(t,x) \d x = \int_\Omega \rho(t,x) \eta(t,x)\d x.
	\end{align*}
	Thus let us now show the property at the level of the approximation. It is clear that in the reaction step segregation is kept as it does not change the support of both $\rho$ and $\eta$. Hence we only need to prove that segregation is kept in the diffusion step. This is done by contradiction. Let us assume there exists an $1\leq n \leq N$ such that
	\begin{align}
		\label{eq:no_overlap}
		\int_\Omega \rhohalf(x) \etahalf(x)\d x = 0,
	\end{align}
	while
	\begin{align*}
		\int_\Omega \rhonp(x) \etanp(x) \d x > 0.
	\end{align*}
	Then there exists $\delta > 0$ and a set $B$ with $|B|>0$, such that
	\begin{align*}
		 |\curlyT'(x)|<\infty\,, \qquad \rhonp(x)> \delta\,, \quad\mbox{and}\quad \etanp(x) > \delta, 
	\end{align*}
	for almost all $x\in B$. As both species have the same transport map $\curlyT$ in common there exists a set $A$ such that $A = \curlyT(B)$ and
	\begin{align*}
		\begin{split}
			0 &< \delta < \rhonp(x) =   \rhohalf(\curlyT(x))\curlyT'(x),\\
			0 &< \delta <  \etanp(x)=  \etahalf(\curlyT(x))\curlyT'(x),
		\end{split}
	\end{align*}
	which is absurd, for we assumed \eqref{eq:no_overlap}. Thus segregation is kept at each iteration which concludes the proof of the theorem.	
\end{proof}

\section{\label{sec:numerics}Numerical simulations}
In this section we illustrate our main result by showing some numerical simulations using a different splitting scheme. This splitting scheme is based on the finite volume scheme introduced in \cite{BF,CCH15,CHS17}. We solve the cross-diffusion inner time step with this finite volume scheme instead of the optimal mass transport approach. Then we solve the reaction step along the ODEs. We refer to \cite{CKL17} for the numerical difficulties related to the implementation of the mass transport approach in a splitting fashion. 

\begin{figure}[ht!]
	\centering
	\subfigure[Initial data.]{
		\includegraphics[width=0.45\textwidth]{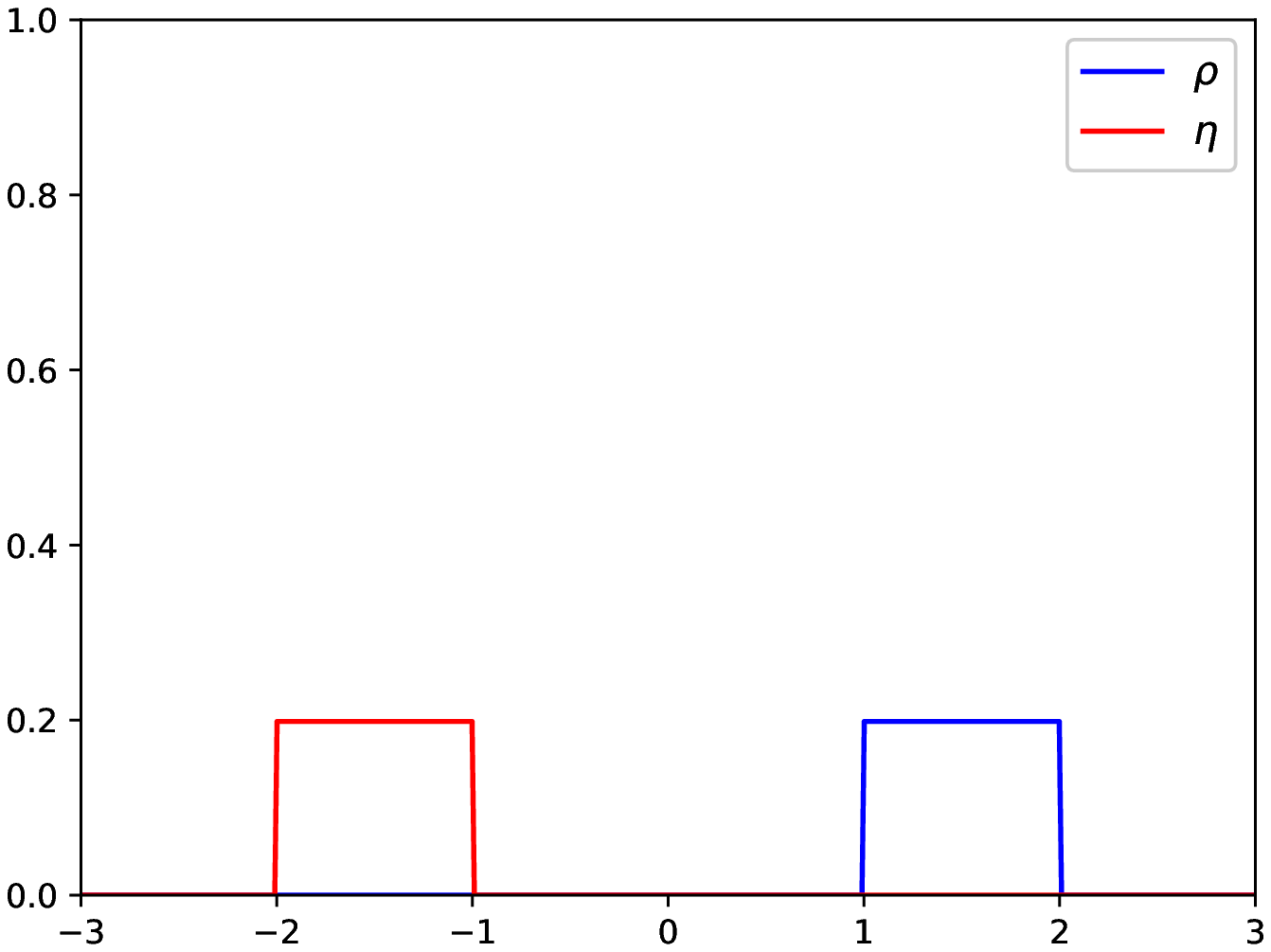}
	}
	\subfigure[Final time.]{
		\includegraphics[width=0.45\textwidth]{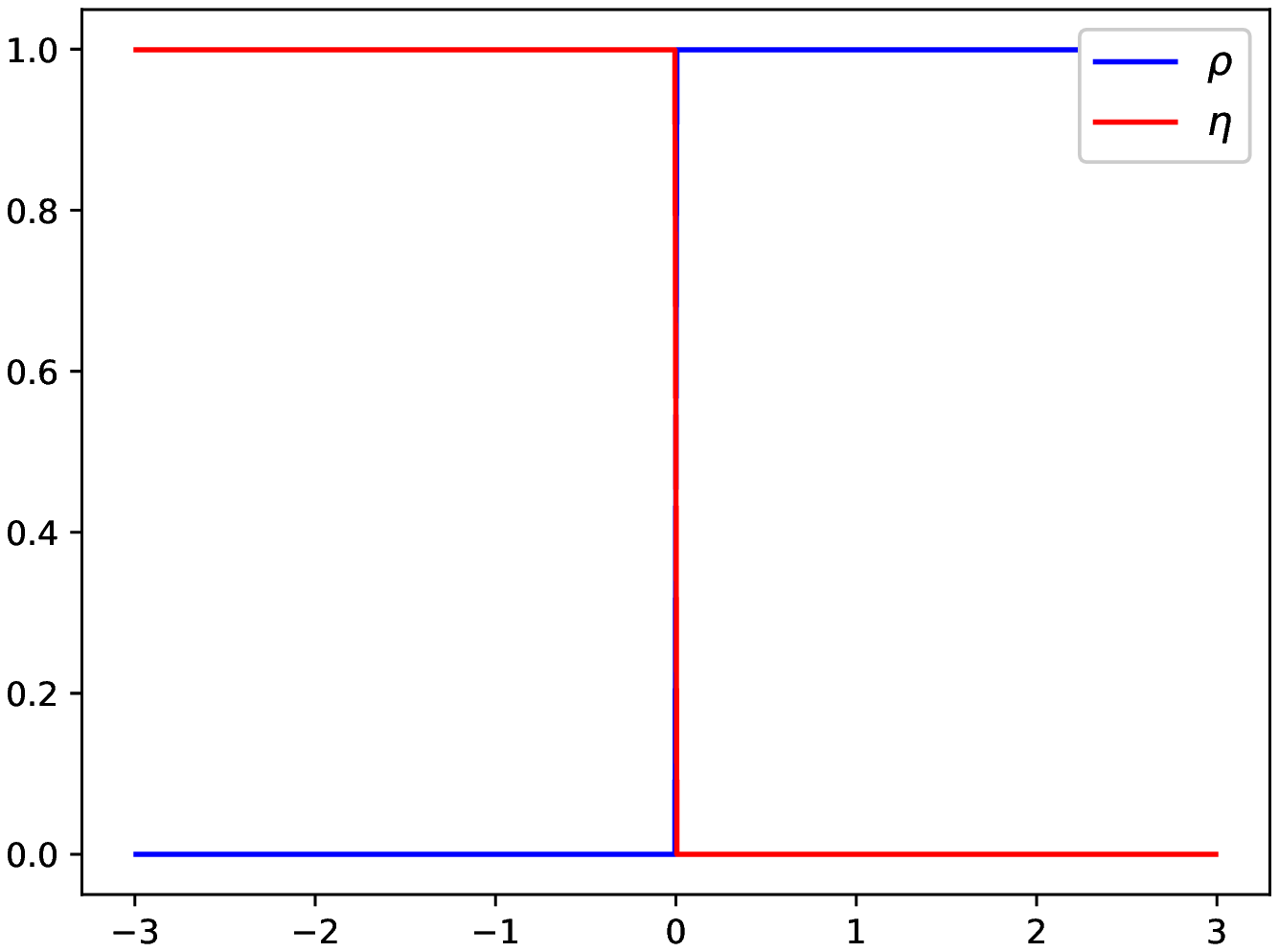}
	}
	\caption{System \eqref{eq:lotka_volterra} initialised with two indicator functions. Both species diffuse individually while increasing according to the reaction term. They remain segregated in agreement with Theorem \ref{thm:segregation} and reach a constant stationary state after some time.}
	\label{fig:lotka_volterra_init_n_final}
\end{figure}

\subsection{Lotka-Volterra type reaction}
Let us consider the system
\begin{align}
	\label{eq:lotka_volterra}
	\begin{split}
	\partial_t \rho &= \partial_x (\rho \partial_x(\sigma)) + \rho(1-\sigma),\\
	\partial_t \eta &= \partial_x (\eta \partial_x(\sigma)) + \eta(1-\sigma),
	\end{split}
\end{align}
modelling two species avoiding overcrowding due to the nonlinear cross-diffusion term as well. In addition the growth term is of Lotka-Volterra type, \emph{cf.} Figures \ref{fig:lotka_volterra_init_n_final} \& \ref{fig:lotka_volterra_intermediate}.

\begin{figure}[ht!]
 	\centering
 		\includegraphics[width=0.45\textwidth]{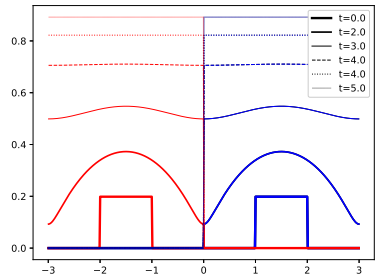}
 	\caption{Evolution in time of system \eqref{eq:lotka_volterra}.}
	\label{fig:lotka_volterra_intermediate}       
\end{figure}

Let us consider now a little modification of system \eqref{eq:lotka_volterra}, as in \cite{BerDaPMim},
\begin{align}
	\label{eq:lotka_volterra_BerDaPMim}
	\begin{split}
	\partial_t \rho &= \partial_x (\rho \partial_x(\sigma)) + \rho(1-\sigma),\\
	\partial_t \eta &= \partial_x (\eta \partial_x(\sigma)) + \eta(1-\sigma/2).
	\end{split}
\end{align}
Using the same parameters and domain as in \cite{BerDaPMim} Figures \ref{fig:lotka_volterra_BerDaPMim_init_n_final} \& \ref{fig:lotka_volterra_BerDaPMim_intermediate} show complete segregation in agreement with their result and moreover numerically validate our main result in Theorem \ref{thm:segregation}. 
 \begin{figure}[ht!]
 	\centering
 	\subfigure[Initial data.]{
 		\includegraphics[width=0.45\textwidth]{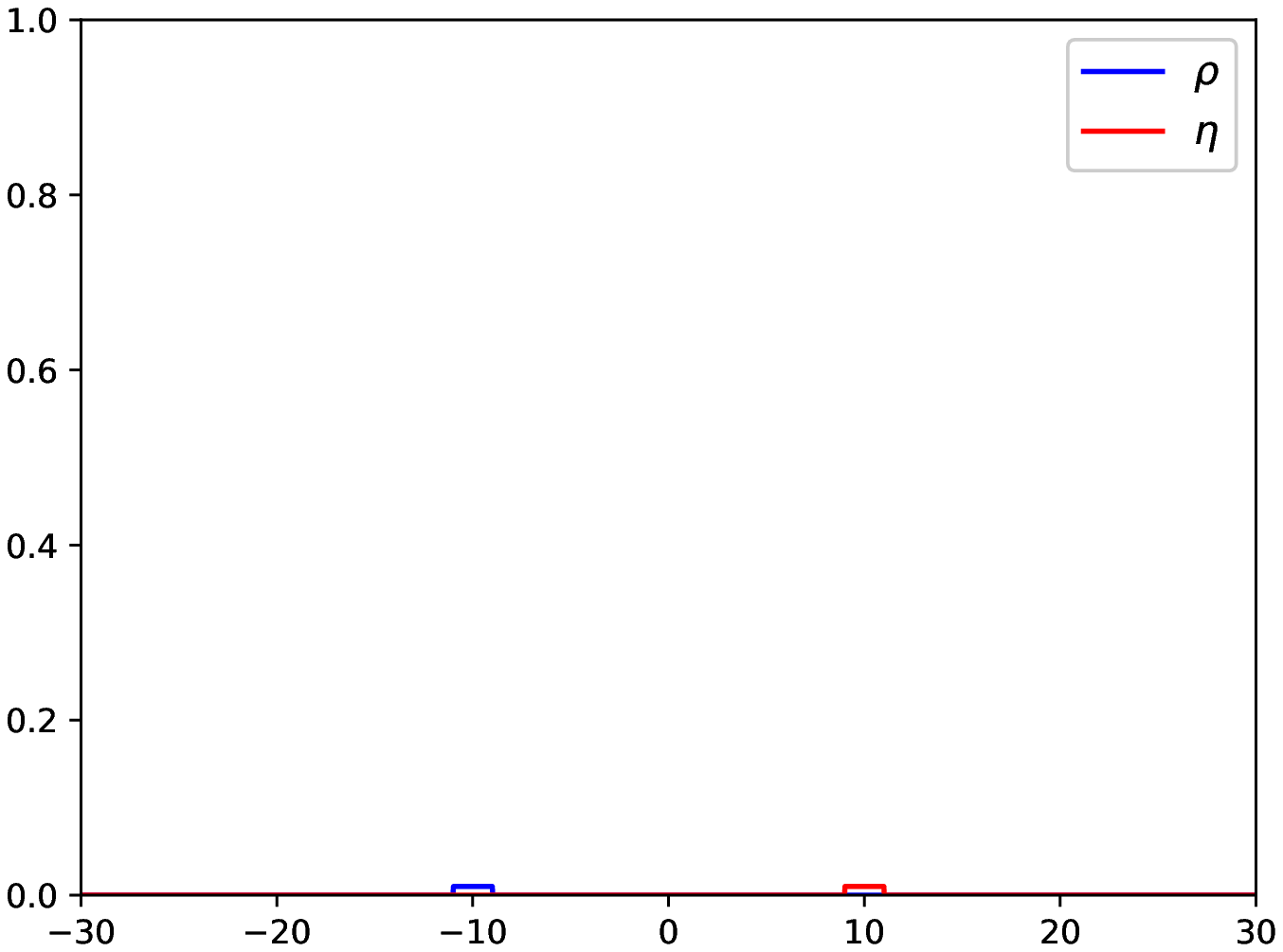}
 	}
 	\subfigure[Final time.]{
 		\includegraphics[width=0.45\textwidth]{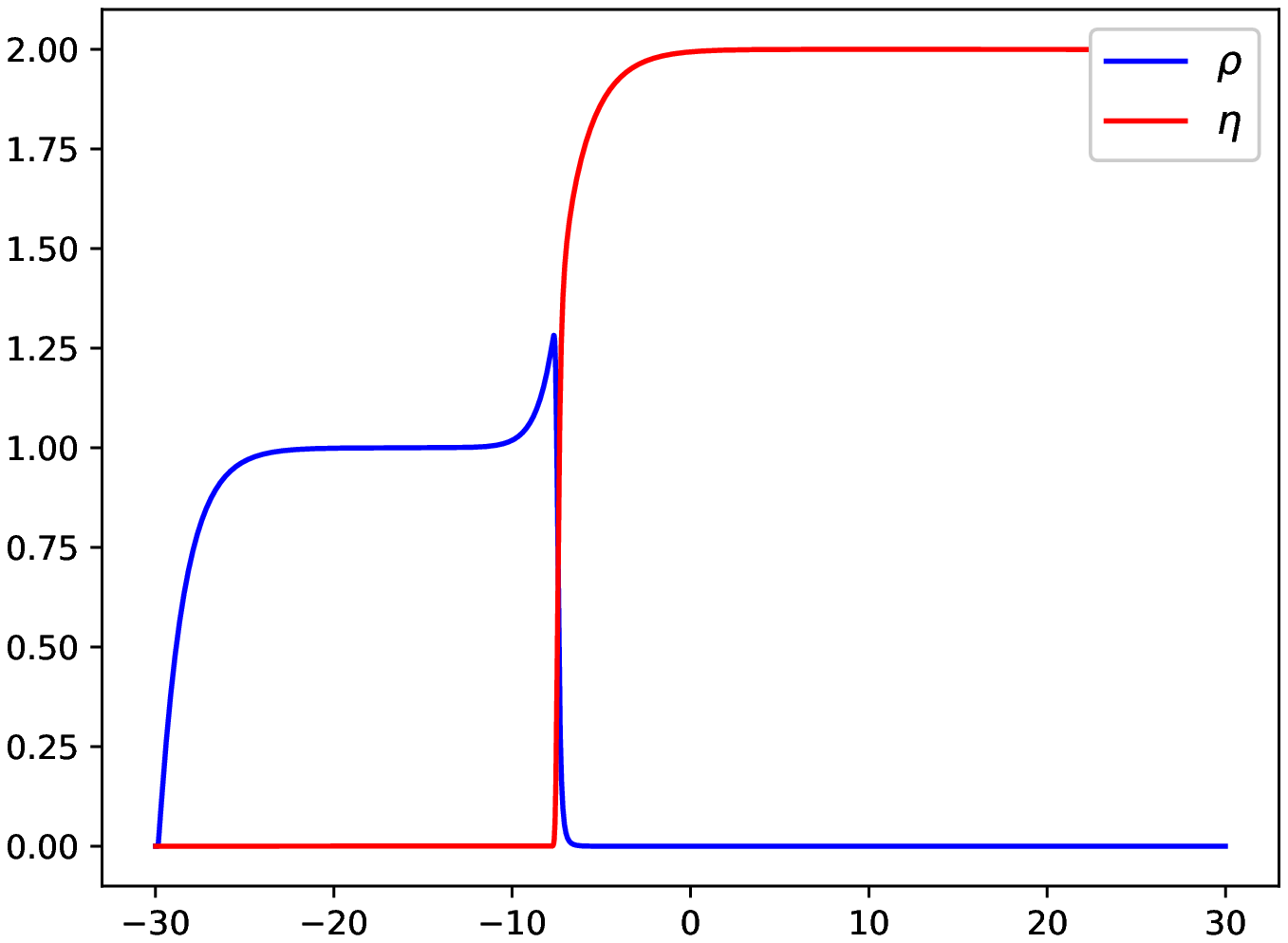}
 	}
 	\caption{System \eqref{eq:lotka_volterra_BerDaPMim} initialised with two indicator functions. Immediately both species start to grow due to the reaction terms while diffusing independently. In addition they remain segregated as proven by Theorem \ref{thm:segregation}.}
\label{fig:lotka_volterra_BerDaPMim_init_n_final}
\end{figure}

\begin{figure}[ht!]
  	\centering
  		\includegraphics[width=0.5\textwidth]{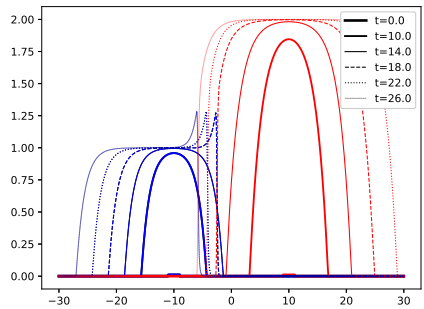}
  	\caption{Evolution in time of system \eqref{eq:lotka_volterra_BerDaPMim}.}
 	\label{fig:lotka_volterra_BerDaPMim_intermediate}       
 \end{figure}

In the second case, Figures \ref{fig:lotka_volterra_BerDaPMim_two_init_n_final} \& \ref{fig:lotka_volterra_BerDaPMim_two_intermediate}, we see the competition between both species clearly as one invades the other.   
 
\begin{figure}[ht!]
 	\centering
 	\subfigure[Initial data.]{
 		\includegraphics[width=0.45\textwidth]{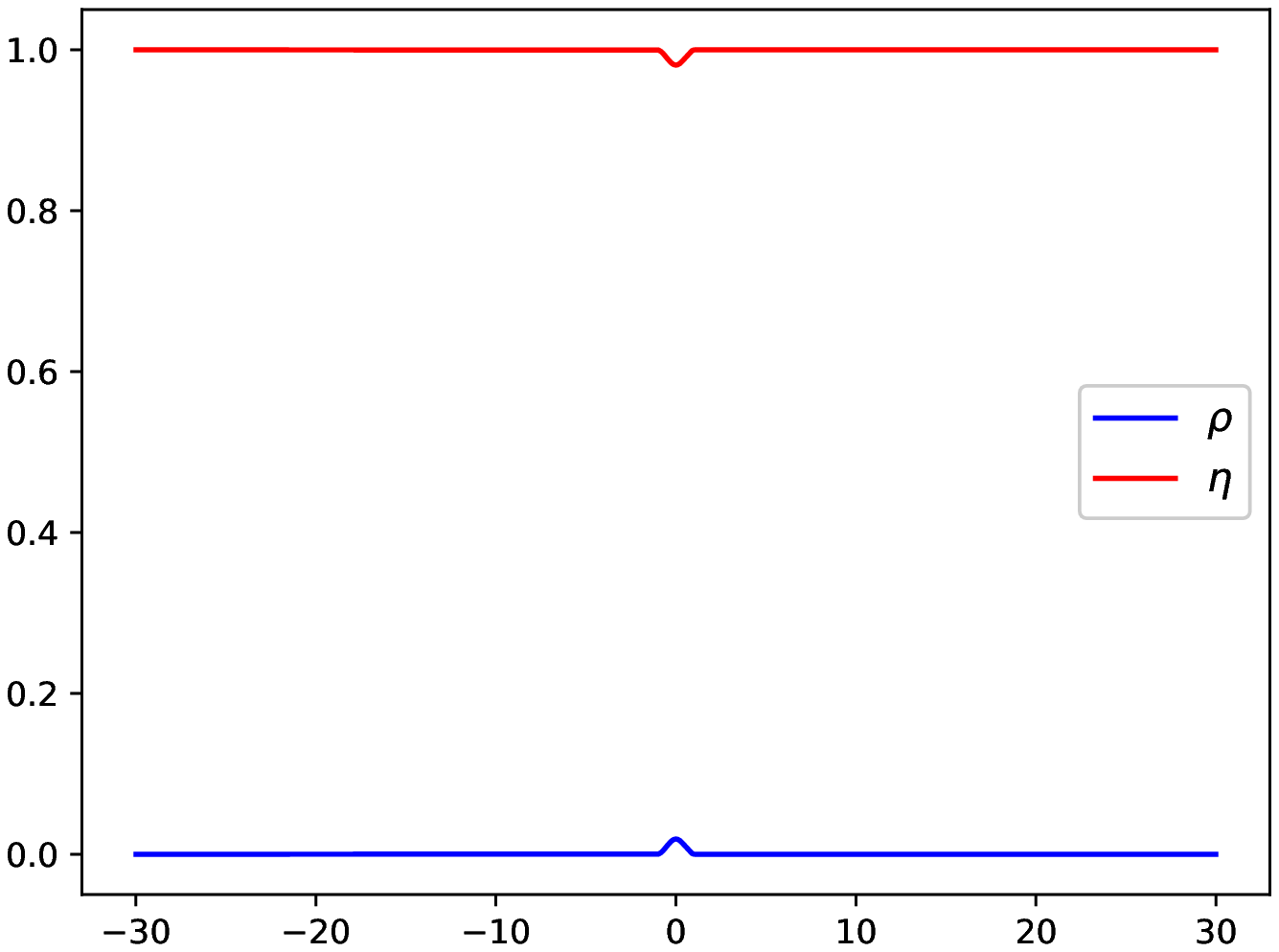}
 	}
 	\subfigure[Final time.]{
 		\includegraphics[width=0.45\textwidth]{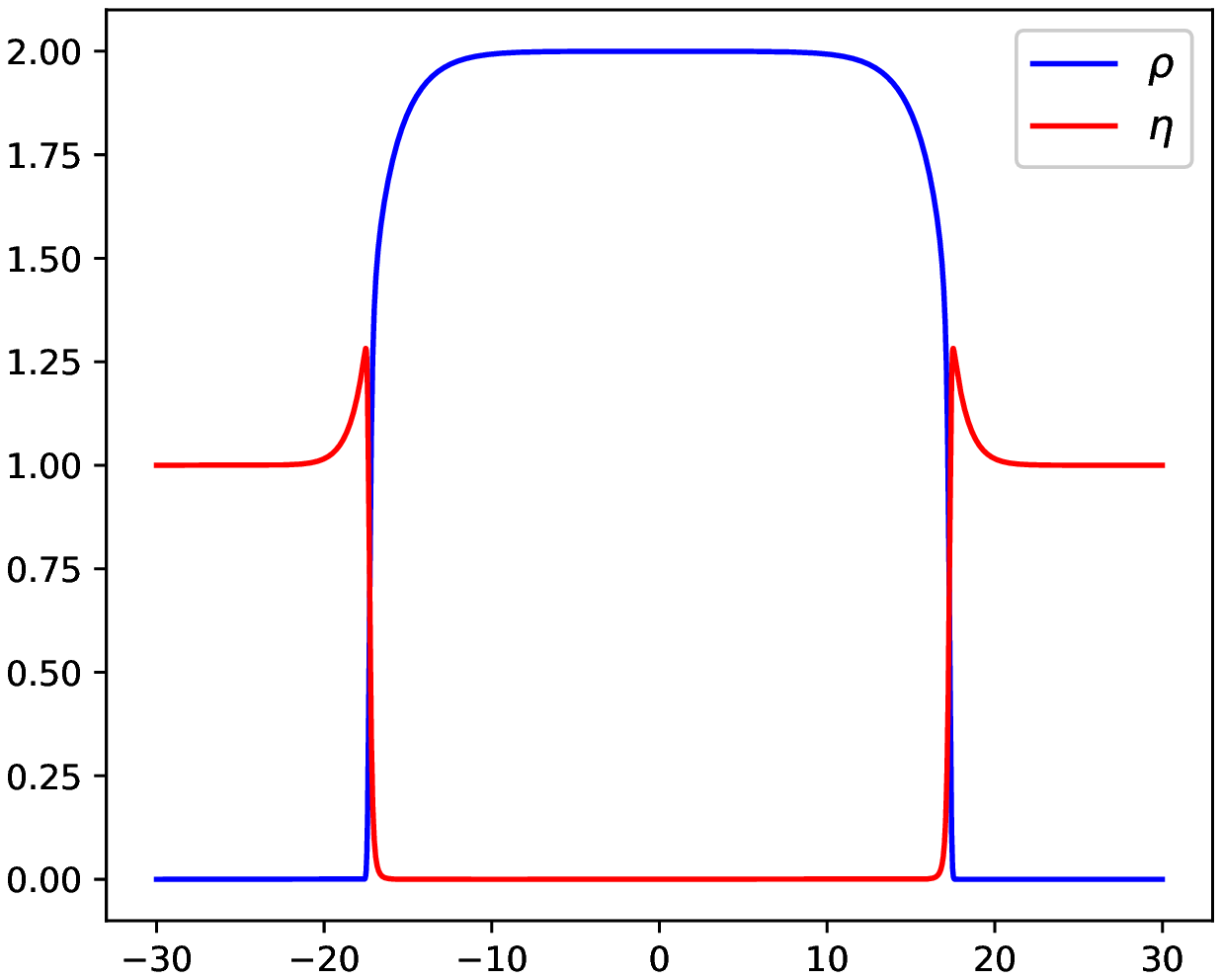}
 	}
 	\caption{System \eqref{eq:lotka_volterra_BerDaPMim} is initialised with $\rho(x)=\mathds{1}_{[-\pi/3,\pi/3]} (1+\cos(3x))$ and $\eta(x)= 1-\rho(x)$. Although both species intermingle initially the first species, $\rho$, grows rapidly and pushes aside the second species, $\eta$. After some time it seems that $\rho$ has completely eliminated $\eta$ on its support.}
 \label{fig:lotka_volterra_BerDaPMim_two_init_n_final}
 \end{figure}

\begin{figure}[ht!]
  	\centering
  		\includegraphics[width=0.5\textwidth]{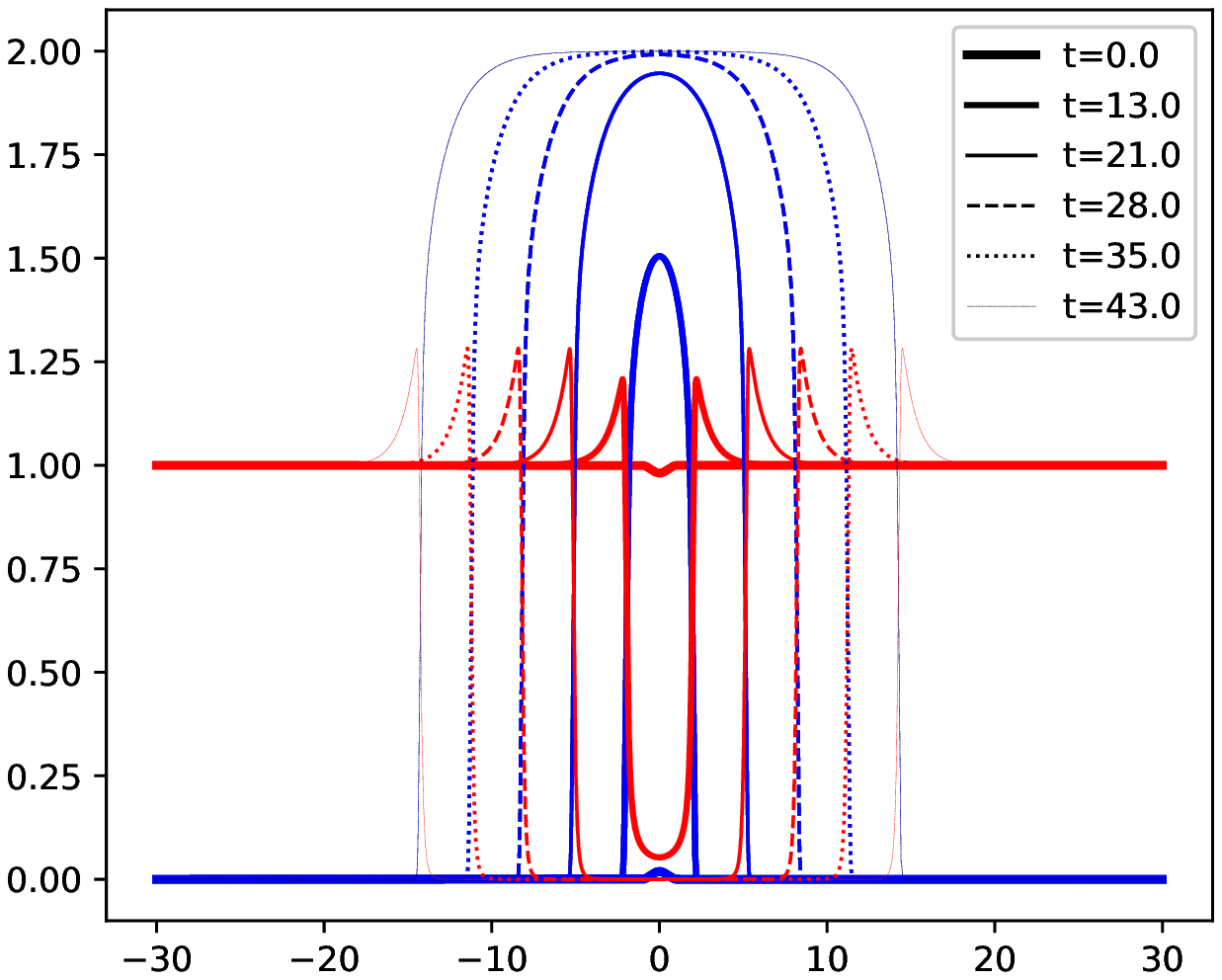}
  	\caption{Evolution in time of system \eqref{eq:lotka_volterra_BerDaPMim}.}
 	\label{fig:lotka_volterra_BerDaPMim_two_intermediate}       
 \end{figure}

\end{document}